\documentclass[12pt]{article}
\usepackage{amssymb, amsmath}
\usepackage{amsthm}
\usepackage{epsfig}

\usepackage{graphicx, amssymb, amsmath}
\usepackage{mathptmx}
\usepackage{latexsym}
\usepackage{longtable}
\usepackage{longtable}
\usepackage{rotating}
\usepackage{subfigure}

\textwidth 16.8cm \textheight 23.5cm \topmargin -1 cm
\oddsidemargin -0.2cm \evensidemargin 3cm

\newtheorem{theorem}{Theorem}

\newtheorem{corollary}[theorem]{Corollary}

\newtheorem{example}{Example}
\newtheorem{remark}{Remark}

\textwidth 15cm \textheight 23.5cm

\begin{document}

\begin{center}
{\Large A General Class of Weighted Rank Correlation Measures}
\end{center}

\begin{center}
{\Large M. Sanatgar$^{\mathrm{a}}$ and A. Dolati$^{\mathrm{b}}$
and M. Amini$^{\mathrm{c}}$}
\end{center}

\begin{center}
\textit{$^{\mathrm{a,b}}$Department of Statistics, College of
Mathematics, Yazd University, Yazd, Iran.\\[0pt]
\textrm{adolati@yazd.ac.ir}\\
$^{\mathrm{c}}$Department of Statistics, Ferdowsi University of Mashhad, Iran.\\[0pt]
\textrm{m-amini@um.ac.ir}}
\end{center}

\begin{abstract}

In this paper we propose a class of weighted rank correlation
coefficients extending the Spearman's rho. The proposed class
constructed by giving suitable weights to the distance between two
sets of ranks to place more emphasis on items having low rankings
than those have high rankings or vice versa. The asymptotic
distribution of the proposed measures and properties of the
parameters estimated by them are studied through the associated
copula. A simulation study is performed to compare the performance
of the proposed statistics for testing independence using
asymptotic relative efficiency calculations.
\end{abstract}

\section{Introduction}
Many situations exist in which $n$ objects are ranked by two
independent sources, where the interest is focused on agreement on
the top rankings and disagreements on items at the bottom of the
rankings, or vice versa. For example, every year a large number of
students apply for higher education. The graduate committee of the
university may like to choose the best candidates based on some
criteria such as GPA and the average of their grades in the major
courses that they passed during their bachelors level. In such
cases, to minimize the cost of interviewing all of the candidates,
a measure which gives more weighted for those who have higher
grades is required. Measures of rank correlation such as the
Spearman's rho and Kendall's tau generally give a value for the
overall agreement without giving explicit information about those
parts of a data set which are similar. This problem motivates the
definition of the \emph{Weighted Rank Correlation} (WRC) measures
which emphasize items having low rankings and de-emphasize those
having high rankings, or vice versa. Salama and Quade
(\cite{Salama1982},\cite{Quade1992}) first studied the WRC of two
sets of rankings, sensitive to agreements in the top rankings and
ignore disagreements on the rest items in a certain degree. For
application in the sensitivity analysis, Iman and Conover
\cite{Iman1987} proposed the top-down concordance coefficient
which centres on the agreement in the top rankings. Shieh
\cite{Shieh1998} studied a weighted version of the Kendall's tau
which could place more emphasis on items having low rankings than
those have high rankings or vice versa. Blest \cite{Blest2000}
introduced a rank correlation measure which gives more weights to
the top rankings. Pinto da Costa and Soares \cite{Pinto2005}
proposed a WRC measure that weights the distance between two ranks
using a linear function of those ranks, giving more importance to
higher ranks than lower ones. Maturi and Abdelfattah
\cite{Maturi2008} proposed a WRC measure with the different
weights to emphasize the agreement of the top rankings.
Coolen-Maturi \cite{Coolen2014} extended this index to the more
than two sets of rankings but again the focus was only on the
agreement on the top or bottom rankings. The behaviour of several
WRC measures derived from Spearman's rank correlation was
investigated by Dancelli et al. \cite{Dancelli2013}. Starting from
the formula of the Spearman's rank correlation measure, this paper
proposed a general class of WRC measures that weight the distance
between two sets of ranks. Two classes of weights, which are
polynomial functions of the ranks, are considered to place more
emphasis the items having low rankings than those have high
rankings or vice versa. The first one which extends the Blest's
rank correlation places more emphasis to the agreement on the top
rankings. The second one constructs a new class of WRC measures
and places more emphasis on the bottom ranks. The rest of the
paper is organized as follows. The proposed WRC measures are
introduced in Section 2. The weighted correlation coefficients
which are the population versions of the proposed WRC measures are
introduced in Section 3. The quantiles of the proposed WRC
measures for small samples and their asymptotic distributions for
large samples are presented in Section 4. A simulation study is
performed to compare the performance of the proposed statistics
for testing independence by using the asymptotic relative
efficiency and the empirical powers of the tests, in Section 5.
Finally, some discussions and possible extensions are given in
Section 6.
\section{The Proposed WRC measures}
Let $ (X_1,Y_1),...,(X_n,Y_n) $ be a random sample of size $ n $
from a continuous bivariate distribution and let
$(R_1,S_1),...,(R_n,S_n)$ denote the corresponding vectors of
ranks. The well-known Spearman's rank correlation measure is given
by
\begin{equation}\label{sp}
\rho_{n,s}=\frac{12}{n^3-n}\sum_{i=1}^{n}R_iS_i-\frac{3(n+1)}{n-1}.
\end{equation}
A drawback of the Spearman's rank correlation is that it generally
gives a value for the overall agreement of two sets of ranks
without giving explicit information about those parts of the sets
which are similar. For example consider 3 consumers A, B and C
that ranked 9 aspects of a product attributing '1' to the most
important aspect and '9' to the least important one. Their
rankings are given in Table 1. As we see, the top ranks of (A,B)
are more similar than those of (A,C) and the bottom ranks of (A,C)
are more similar than those of (A,B), but the Spearman's rank
correlation gives the same value 0.416 for two sets of rankings
(A,B) and (A,C).
\begin{table}\label{tab1}
\caption{Rankings of a product by 3 consumers}
\begin{center}
\begin{tabular}[l]{@{}|l|ccccccccc|}
\hline
A      & 1 & 2 & 3 & 4 & 5 & 6 & 7 & 8 & 9   \\
\hline
B      & {\bf 1} & {\bf 2} & \bf{3}  & 9&8&7&6&4&5  \\
\hline
C      & 5 &    6 & 4 & 3 & 2 & 1 &{\bf 7}&{\bf 8}& {\bf 9}\\

 \hline
\end{tabular}
\end{center}
\end{table}
For the cases where the differences in the top ranks would seem to
be more critical, Blest \cite{Blest2000} suggests that these
discrepancies should be emphasized. He proposed an alternative
measure of rank correlation that attaches more significance to the
early ranking of an initially given order. The Blest's index is
defined by
\begin{equation}\label{blest}
\gamma_n=\frac{2n+1}{n-1}-\frac{12}{n^2-1}\sum_{i=1}^{n}\left(1-\frac{R_i}{n+1}\right)^2S_i.
\end{equation}
The values of the Blest's index for two sets of rankings (A,B) and
(A,C) are given by 0.241 and 0.591, respectively. As we see, in
situations such as the rankings (A,C) where the bottom ranks
should be emphasized, the Blest's index is a suitable rank
correlation measure. In the following we develop a general theory
for weighted rank correlation measures by giving suitable weights
to the distance between two sets of ranks to place more emphasis
on items having low rankings than those have high rankings, or
vice versa. Let $D_i=S_i-R_i$, $i=1,...,n$. The most common form
of the Spearman's rank correlation coefficient between two sets of
rankings $R_1,...,R_n$ and $S_1,....,S_n$ is given by (Kendall,
\cite{Kendall1948})
\begin{equation}\label{spear}
\rho_{n,s}=1-\frac{2\sum\limits_{i=1}^n
D^2_i}{\max(\sum\limits_{i=1}^n D^2_i)},
\end{equation}
where $\max(\sum\limits_{i = 1}^nD^2_i)=(n^3-n)/3$ represents the
value of the summation when there is a perfect discordance between
rankings, that is, $S_i=n+1-R_i$, $i=1,...,n$. Throughout the rest
of the paper we assume, without loss of generality, that the
sample pairs are given in accordance with the increasing magnitude
of $X$ components, so that $R_i=i$, for $i=1,2,...,n$ and
$D_i=S_i-i$. According to Blest's idea \cite{Blest2000} if the set
of points $
(0,0),[(\sum_{i=1}^k(n+1-i),\sum_{i=1}^k(S_i-i),k=1,...,n] $
determined in the coordinate plane, the Spearman's rho is
normalized version of the sum of bars made the width of the given
points, i.e. $ \sum_{k=1}^n\sum_{i=1}^k(S_i-i) $ as a measure of
the disarray of originally ordered data, i.e.,
\begin{equation}\label{rblest}
\rho_{n,s}=1-\frac{2\sum_{k=1}^n\sum_{i=1}^k(S_i-i)}{\max(\sum_{k=1}^n\sum_{i=1}^k(S_i-i))}.
\end{equation}
By changing the order of summation, it is easy to see that $
\sum_{k=1}^n\sum_{i=1}^k(S_i-i)=\frac{1}{2}\sum_{k=1}^nD_i^2 $.
While \eqref{rblest} and \eqref{spear} are two different
representations of ordinary Spearman's rho, the Blest's index is
normalized version of the area which appears from connecting the
mentioned points to each other. By looking again to the Blest's
index, one can imagine that the bars $\eta_k=
\sum_{i=1}^k(S_i-i),k=1,...,n $ is assigned a certain weight, in
comparison to the Spearman's rho that does not give any weight to
the mentioned bar. Now we consider a general class of WRC measures
of the form
\begin{equation}\label{rw}
\nu_n=1-\frac{2\sum\limits_{i = 1}^n
w_i\eta_i}{\max(\sum\limits_{i = 1}^n w_i\eta_i)},
\end{equation}
where the positive constants $w_i$s are suitable weights. To
construct WRC measures which are sensitive to agreement on top
rankings (lower ranks), for $p=1,2,3...$ and $n>1$, we choose the
weights $w_i=(n+1-i)^p-(n-i)^p$. The class of WRC measures
constructed by (\ref{rw}) is then
\begin{equation}\label{rwl}
\nu_{n,p}^{(l)}=1 + \frac{2\sum\limits_{i = 1}^n
(i-S_i)(n+1-i)^p}{\sum\limits_{i = 1}^n (n+1-2i)(n+1-i)^p}.
\end{equation}
Alternatively, by choosing the weights $ w_i=i^p-(i-1)^p $, one
can obtain measures which are sensitive to agreement on bottom
rankings (upper ranks). In this case the class of WRC measures
(\ref{rw}) is simplified to
\begin{equation}\label{rwu}
\nu _{n,p}^{(u)} = 1 + \frac{2\sum\limits_{i = 1}^n
(i-S_i)(n^p-(i-1)^p)}{\sum\limits_{i = 1}^n
(n+1-2i)(n^p-(i-1)^p)}.
\end{equation}
Let $\kappa_{n,p}=\sum_{i=1}^n  i^p$. It is easily seen that
$$
\sum\limits_{i = 1}^n
(n+1-2i)(n+1-i)^p=2\kappa_{n,p+1}-(n+1)\kappa_{n,p},
$$
and
$$
\sum\limits_{i = 1}^n
(n+1-2i)(n^p-(i-1)^p)=2\kappa_{n-1,p+1}-(n-1)\kappa_{n-1,p}.
$$
The coefficients (\ref{rwl}) and (\ref{rwu}) could be rewritten in
terms of $\kappa_{n,p}$ as
\begin{equation}\label{rwl1}
\nu_{n,p}^{(l)}=\frac{(n+1)\kappa_{n,p}-2\sum_{i=1}^n
S_i(n+1-i)^p}{2\kappa_{n,p+1}-(n+1)\kappa_{n,p}},
\end{equation}
and
\begin{equation}\label{rwu1}
\nu_{n,p}^{(u)}=\frac{-(n+1)\kappa_{n-1,p}+2\sum_{i=1}^n
S_i(i-1)^p}{2\kappa_{n-1,p+1}-(n-1)\kappa_{n-1,p}}.
\end{equation}
Note that for $p=1$, both of $\nu_{n,p}^{(l)}$ and
$\nu_{n,p}^{(u)}$ reduce to the Spearman's rank correlation
coefficient (\ref{sp}). For $p=2$, the measure $\nu_{n,p}^{(l)}$
reduces to the Blest's rank correlation coefficient (\ref{blest}).
The coefficients $ \nu_{n,p}^{(l)} $ and $ \nu_{n,p}^{(u)} $ are
asymmetric WRC measures; that is, the correlation of $(X,Y)$ is
not the same as those of $(Y,X)$. One can obtain the symmetrized
version of (\ref{rwl}) as follows
\begin{eqnarray*}
\nu_{n,p}^{(s.l)}&=&\frac{\nu_{n,p}^{(l)}(X,Y)+\nu_{n,p}^{(l)}(Y,X)}{2}\\
&=&\frac{(n+1)\kappa_{n,p}-\sum_{i=1}^n[S_i(n+1-i)^p+i(n+1-S_i)^p]}{2\kappa_{n,p+1}-(n+1)\kappa_{n,p}}.
\end{eqnarray*}
Similarly the symmetrized version of (\ref{rwu}) is given by
\begin{eqnarray*}
\nu_{n,p}^{(s.u)} &=&\frac{-(n+1)\kappa_{n-1,p}+\sum_{i=1}^n
[S_i(i-1)^p+ i(S_i-1)^p]}{2\kappa_{n-1,p+1}-(n-1)\kappa_{n-1,p}}.
\end{eqnarray*}
For $ p=1 $ the WRC measures $ \nu_{n,p}^{(s.l)} $ and $
\nu_{n,p}^{(s.u)} $ are equal to the Spearman's rank correlation
(\ref{sp}). For $ p=2 $ the measure $\nu_{n,p}^{(s.l)}$ is the
symmetrized version of the Blest's index (\ref{blest}). Table 2
shows the values of $\nu_{n,p}^{(l)}$, $ \nu_{n,p}^{(u)}$,
$p=1,2,3,4,5$ and their symmetrized versions for the rankings
(A,B) and (A,C) given in Table 1. The result illustrates the
sensitivity of these indices to the agreement on top and bottom
rankings.
\begin{table}\label{T1}
\caption{\small Values of the Spearman's rho and WRC measures for
three sets of rankings in Table 1.}\label{sp.wrc}
\begin{center}
\renewcommand{\arraystretch}{2}
 \scalebox{.8}{\begin{tabular}[1]{@{}|c|cccc|cccc|}
 \hline
&~~~~~~~~($A,C$)&&&&~~~~~~~~($A,B$)&&&\\
\hline $p$&$ \nu^{(l)}_{n,p} $&$ \nu^{(u)}_{n,p} $&$
\nu^{(s.l)}_{n,p} $&$ \nu^{(s.u)}_{n,p} $
 &$ \nu^{(l)}_{n,p} $&$ \nu^{(u)}_{n,p} $&$ \nu^{(s.l)}_{n,p} $&$ \nu^{(s.u)}_{n,p} $ \\
\hline 1&0.433&0.433&0.433&0.433
&0.433&0.433&0.433&0.433\\
2&0.270&0.637&0.263&0.645
&0.596&0.229&0.603&0.220\\
3&0.155&0.768&0.140& 0.776
&0.720&0.112&0.728&0.094\\
4&0.081&0.851&0.057& 0.858
&0.808&0.045&0.815&0.016\\
5&0.033&{\bf 0.905}&0.001& {\bf 0.910}
&{\bf 0.869}&0.006&{\bf 0.875}&0.032\\
\hline\hline
\end{tabular}}
\end{center}
\end{table}
We note that $ \nu_{n,p}^{(l)} $, $ \nu_{n,p}^{(u)} $ and their
symmetrized versions take values in $ [ -1,1] $. In particular,
the value of these measures is equal to $ 1 $ when $ S_i = i $ (a
perfect positive dependency between two sets of ranks) and they
take $ -1 $ when $ S_i = n + 1-i $ (a perfect negative dependency
between two sets of ranks).
\section{Weighted correlation coefficients}
In this section we introduce the weighted correlation
coefficient's $\nu_p^{(l)}$ and $\nu_p^{(u)}$ and their
symmetrized versions $\nu_p^{(s.l)}$ and $\nu_p^{(s.u)}$ as the
population counterparts of the WRC measures $\nu_{n,p}^{(l)}$,
$\nu_{n,p}^{(u)}$, $\nu_{n,p}^{(s.l)}$ and $\nu_{n,p}^{(s.u)}$.
Each of these coefficients can be expressed as a linear functional
of the quantity $A(u,v)=C(u,v)-\Pi(u,v) $, where $ C $ is the
copula \cite{Sklar1959} associated with the pair $ (X,Y) $ and $
\Pi(u,v)=uv $ is the copula of independent random variables. For
$p=2,3,... $, we have
\begin{align}\label{pop}
\nu_p^{(l)}&=2(p+1)(p+2)\int_0^1\int_0^1(1-u)^{p-1}(C(u,v)-uv)dudv,\cr
\nu_p^{(u)}&=2(p+1)(p+2)\int_0^1\int_0^1u^{p-1}(C(u,v)-uv)dudv,\cr
\nu_p^{(s.u)} &=(p+1)(p+2)\int_0^1\int_0^1
(u^{p-1}+v^{p-1})(C(u,v)-uv)dudv,\cr
 \nu_p^{(s.l)} &=(p+1)(p+2)\int_0^1\int_0^1
((1-u)^{p-1}+(1-v)^{p-1}) (C(u,v)-uv)dudv.
\end{align}
Note that for $p=1$, all of these coefficients reduce to the
Spearman's rho given by
$$
\rho_s=12\int_{0}^{1}\int_{0}^{1}(C(u,v)-uv)dudv.
$$
For $p=2$, the coefficient $\nu^{(l)}_p$ reduces to the Blest's
correlation coefficient \cite{Genest2003} given by
\begin{equation*}
\gamma=24\int_{0}^{1}\int_{0}^{1}(1-u)C(u,v)dudv-2.
\end{equation*}
\begin{remark}
A probabilistic interpretation can be made for the weighted
correlation coefficients $\nu_p^{(l)}$ and $\nu_p^{(u)}$ and their
symmetrized versions $\nu_p^{(s.l)}$ and $\nu_p^{(s.u)}$. We
provide an illustration for $\nu_p^{(l)}$. For $ p = 1,2,... $
consider the cumulative distribution function
\begin{equation*}
F_p(u,v)=(1-(1-u)^p)v,\quad 0\leq u,v \leq 1.
\end{equation*}
Let $(U,V)$ be a random vector with the joint distribution
function $F_p$. For a copula $C$, the coefficient $\nu_p^{(l)}$
has the following representation
\begin{eqnarray*}\label{associationp}
\nu_p^{(l)}&=&\frac{2(p+1)(p+2)}{p}\int_{0}^{1}\int_{0}^{1}(C(u,v)-uv)dF_p(u,v)\\
&=&\frac{{{E_{F_p}}\left[ {C(U,V) - \Pi (U,V)}
\right]}}{{{E_{F_p}}\left[ {M(U,V) - \Pi (U,V)} \right]}},
\end{eqnarray*}
where $M(u,v)=\min(u,v)$ and $E_{F_{P}}$ denotes the expectation
with respect to $F_p$. Thus, the coefficient $\nu_p^{(l)}$ can be
considered as an average distance between the copula $C$ and the
independent copula $\Pi$, where the average is taken with respect
to the bivariate distribution function $F_p$. The proposed
weighted correlation coefficients could be seen as average
quadrant dependent (AQD) measures of association studied in
\cite{Behboodian2005}.
\end{remark}
For $\nu_p^{(l)}$, it is more convenient to use the following
alternative representation
\begin{equation}\label{altl}
\nu_p^{(l)}=\frac{2(p+1)(p+2)}{p}\int_{0}^{1}\int_{0}^{1}(1-u)^p(1-v)dC(u,v)-\frac{p+2}{p},
\end{equation}
which follows from the fact that
\begin{eqnarray*}
\int_{0}^{1}\int_{0}^{1}C(u,v)dF_p(u,v)&=&P(W\leq U,Z\leq V)\\
&=&P(U\geq W,V\geq
Z)\\&=&\int_{0}^{1}\int_{0}^{1}\bar{F}_p(u,v)dC(u,v),
\end{eqnarray*}
where $(W,Z)$ and $(U,V)$ are two independent pairs distributed as
the copula $C$ and the joint distribution $F_p$, respectively and
$\bar{F}_p(u,v)=P(U>u,V>v)=(1-u)^p(1-v)$, is the survival function
associated with $F_p$. An alternative representation for
$\nu_p^{(u)}$ is given by
\begin{equation}\label{altu}
\nu_p^{(u)}=\frac{2(p+1)(p+2)}{p}\int_{0}^{1}\int_{0}^{1}(1-u^p)(1-v)dC(u,v)-(p+2).
\end{equation}
In the following examples we provide the values of the weighted
correlation coefficient's $\nu_p^{(l)}$ and $\nu_p^{(u)}$ for some
copulas.
\begin{example}
Let $C_{\theta}$ be a member of the Cuadras-Aug\'e family of
copulas \cite{Nelsen2006} given by
\begin{equation}\label{cuadras}
C_{\theta}(u,v)=[\min(u,v)]^\theta [uv]^{1-\theta},\quad \theta
\in [0,1].
\end{equation}
This family of copula is positively ordered in $\theta \in [0,1]$,
that is, for $\theta_1\leq \theta_2$, we have that
$C_{\theta_1}(u,v)\leq C_{\theta_2}(u,v)$ for all $u,v\in [0,1]$.
This family of copulas has no lower tail dependence, whereas the
upper tail dependence parameter is given by $\lambda_U=\theta$
\cite{Nelsen2006}. For this family of copulas we have
\begin{equation}\label{quadu}
\nu^{(u)}_p=\frac{(p+2)(p^2+2p-3+\theta)}{p(p+3-\theta)},
\end{equation}
and
\begin{equation}\label{quadl}
\nu^{(l)}_p=\frac{\theta(p+2)\left(1-p(p+1)B(p,
4-\theta)\right)}{p(2-\theta)},
\end{equation}
where $B(a,b)=\int_{0}^{1}\int_{0}^{1}x^{a-1}(1-x)^{b-1}dx$, is
the beta function. For every $\theta\in[0,1]$, $\nu^{(u)}_p$ is
increasing in $p$ and $\nu^{(l)}_p$ is decreasing in $p$. For
$p=1,2,3,4,5,10 $, the values of $ \nu_p^{(l)}$ and $\nu_p^{(u)}$,
 as a function of $\theta $ is plotted in Figure
\ref{fig:1}. In particular for this family of copulas it follows
that for every $\theta\in[0,1]$ and $p=2,3,...$,
\begin{equation*}
\nu_{p}^{(l)}\leq\nu_{1}^{(l)}= \rho_s=\nu_1^{(u)} \leq
\nu_{p}^{(u)}.
\end{equation*}
\end{example}
\begin{figure}
\includegraphics[scale=0.52]{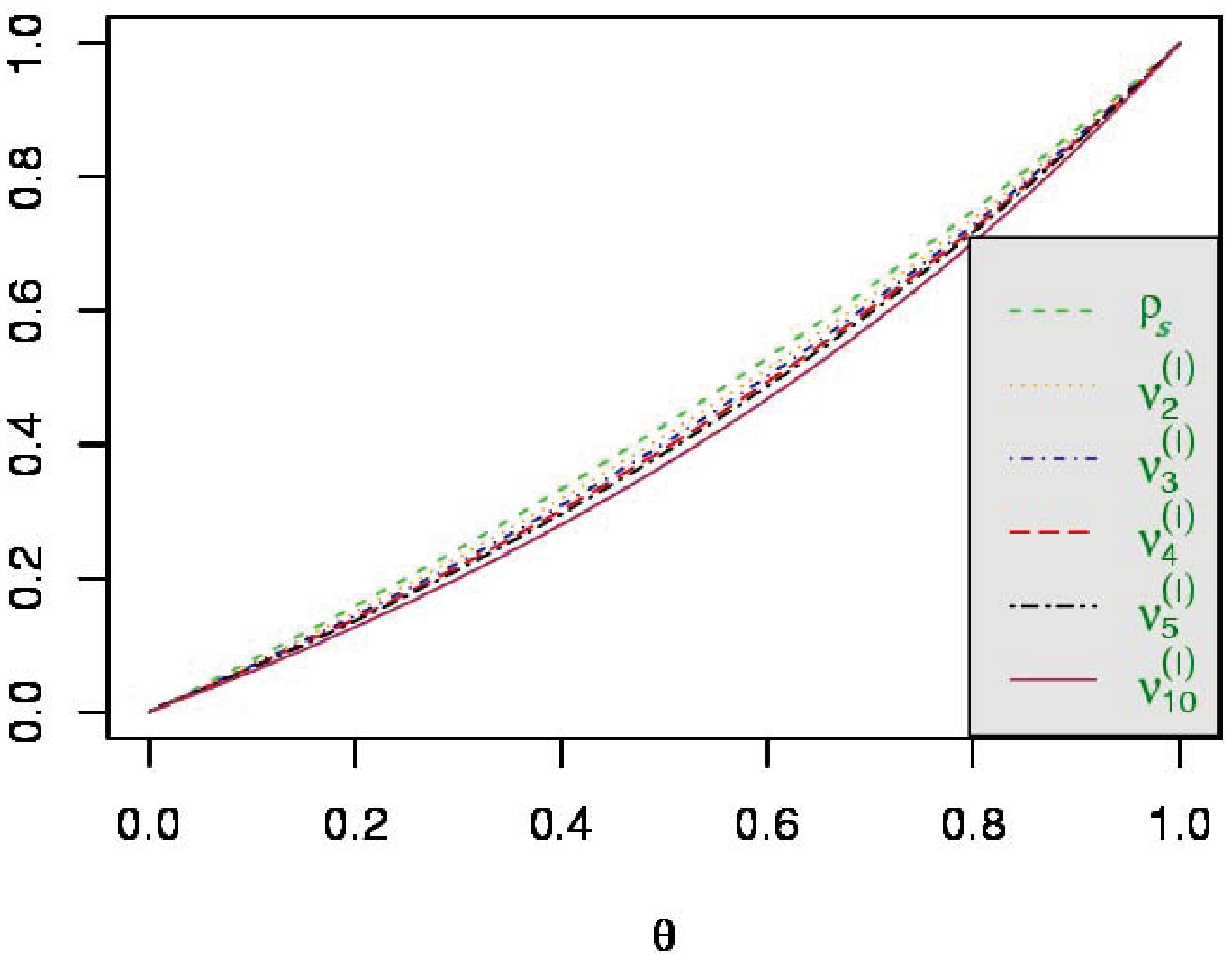}
\includegraphics[scale=0.48]{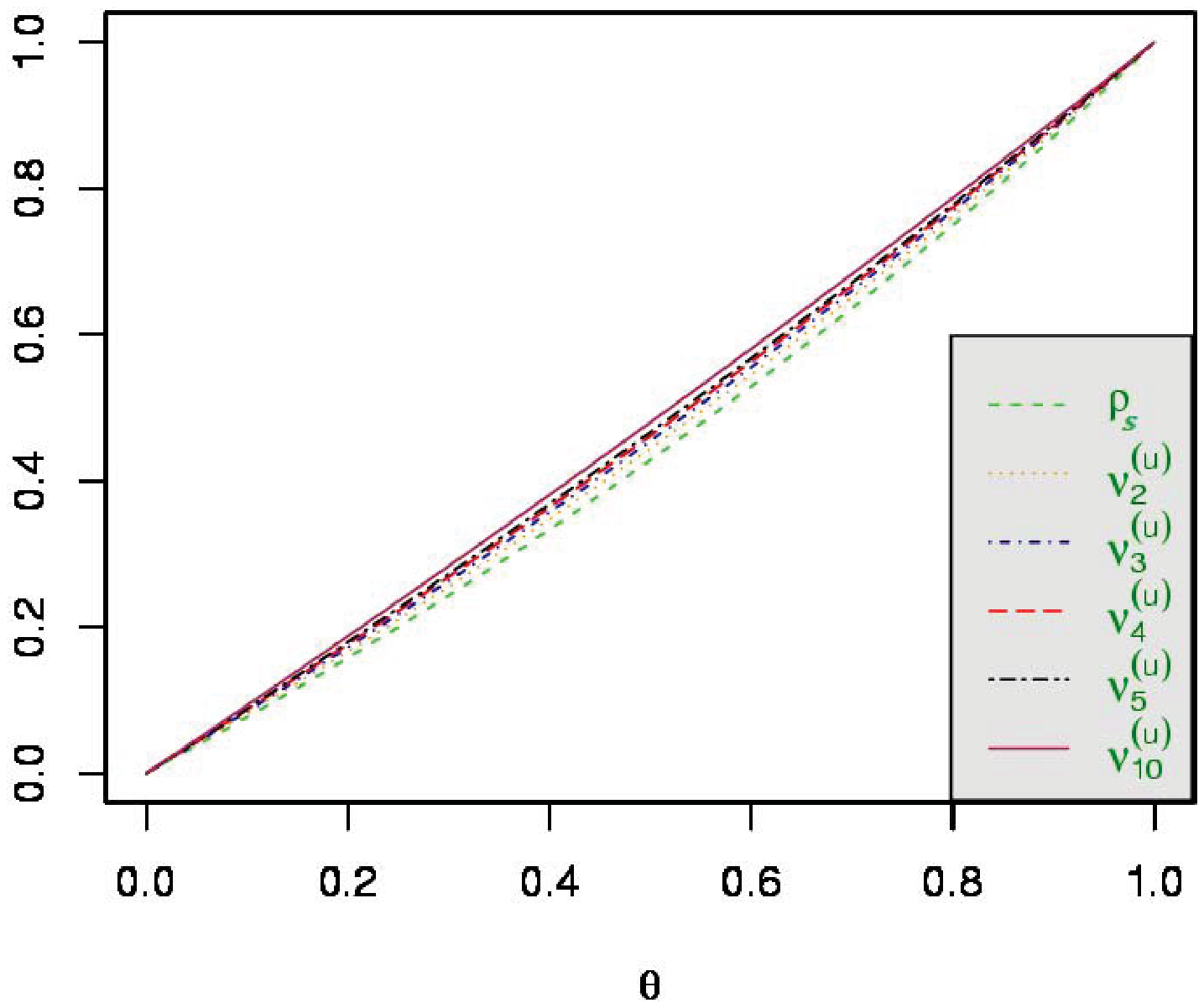}
\caption{ The values of $ \nu_p^{(l)}$ and $ \nu_p^{(u)}$,
$p=1,2,3,4,5,10 $, for Cuadras-Aug\'e family of
copulas.}\label{fig:1}
\end{figure}
\begin{example}
Let $C_{\theta}$ be a member of Raftery family of copulas
\cite{Nelsen2006} given by
\begin{equation*}
 C_{\theta} (u,v) = \min (u,v) + \frac{{1 - \theta }}{{1 + \theta }}{(uv)^{\frac{1}{{1 - \theta }}}}
 \left\{ {1 - {{[\max (u,v)]}^{ - \frac{{1 + \theta }}{{1 - \theta }}}}} \right\}, \quad\theta \in
 [0,1].
\end{equation*}
This family of copulas is also positively ordered in $ \theta \in
[0,1] $ and has no upper tail dependence, whereas the lower tail
dependence parameter is given by
$\lambda_L=\frac{2\theta}{\theta+1} $ \cite{Nelsen2006}. The
values of $ \nu_p^{(1)}$ and $ \nu_p^{(u)}$, $p=1,2,3,4,5,10 $, as
a function of $ \theta $ are plotted in Figure \ref{fig:2}. For
this family of copulas as we see for every $\theta \in [0,1]$ and
$p=2,3,...$,
\begin{equation*}
\nu_{p}^{(u)}\leq\nu_{1}^{(u)}=\rho_{s}=\nu_1^{(l)} \leq
\nu_{p}^{(l)}.
\end{equation*}
\end{example}
\begin{figure}
\includegraphics[scale=0.7]{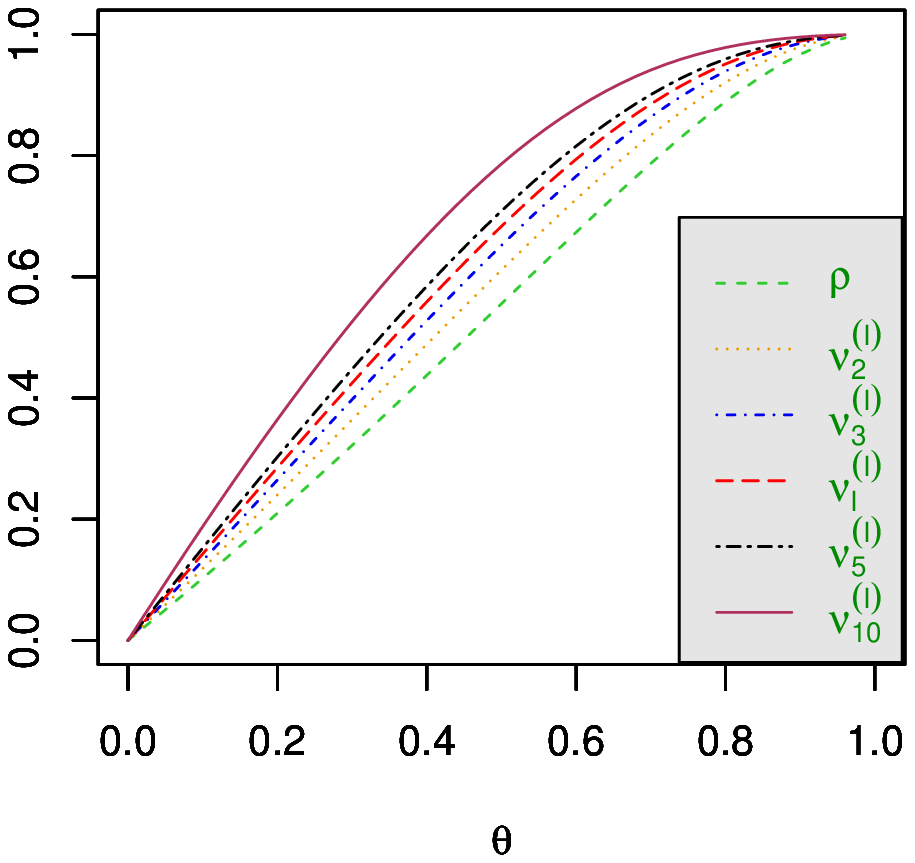}
\includegraphics[scale=0.7]{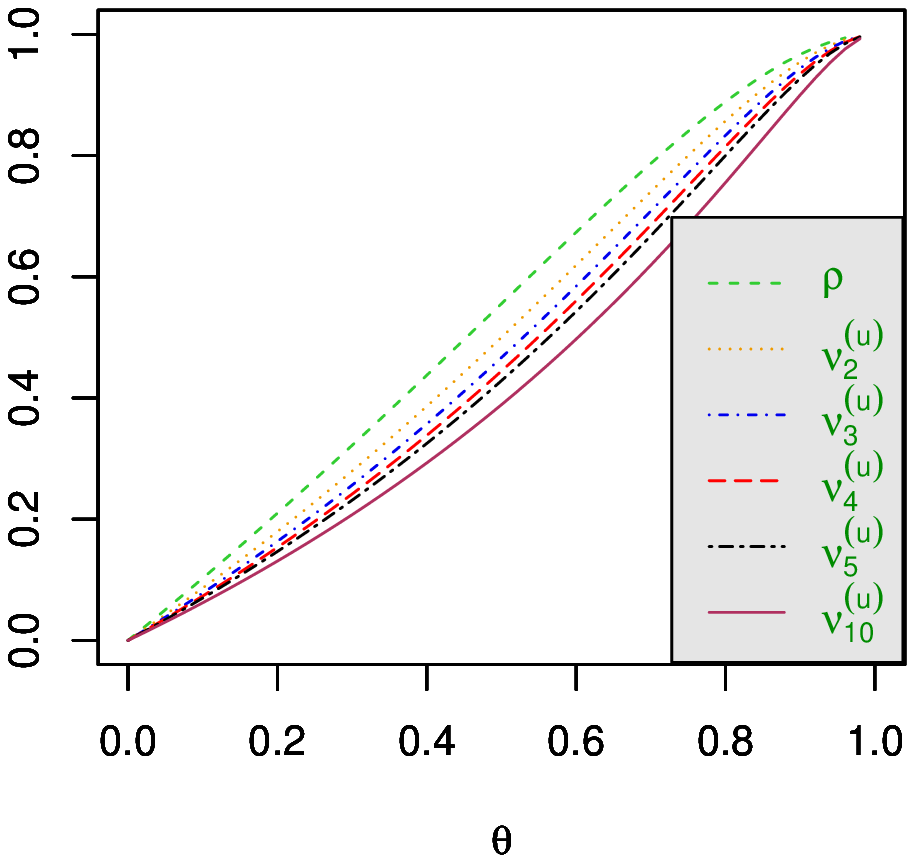}
\caption{The values of $ \nu_p^{(l)}$ and $ \nu_p^{(u)}$,
$p=1,2,3,4,5,10 $ for Raftery family of copulas.}\label{fig:2}
\end{figure}
\section{The Quantiles and Asymptotic Distributions}
The asymptotic behavior of the proposed WRC measures in general
can be studied by the standard results from the theory of
empirical processes \cite{Van1996}. Before that, we mention the
asymptotic formula for $ \kappa_{n,p}=\sum_{i=1}^n i^p$, that we
need in the sequel. By definition of the Riemann integral, it
holds that
\begin{equation}\label{riemman}
\frac{\kappa_{n,p}}{n^{p+1}}=\frac{1}{n}\sum_{i=1}^n\left(
\frac{i}{n}\right) ^p=\int_0^1 x^p dx+O(n^{-1}).
\end{equation}
Let $(X_1,Y_1),...,(X_n,Y_n)$ be a random sample of size $ n $
from a pair $(X,Y)$ of continuous random variables with the joint
distribution function $H$, marginal distribution functions $F$ and
$G$ and the associated copula $C$. Let $
(1,S_1),(2,S_2),...,(n,S_n) $ be the ranks of the rearranged
sample. It is known that (R$\ddot{{\text u}}$schendorf,
\cite{Ruschendorf1976}) the copula $C$ can be estimated by the
empirical copula defined for all $ u,v \in [0,1] $ by
\begin{equation*}
C_n(u,v) = \frac{1}{n}\sum\limits_{i = 1}^n I(\frac{i}{n +1} \le
u,\frac{S_i}{n + 1} \le v),
\end{equation*}
 where $I(A)$ denotes the indicator function of the set $A$.
The empirical versions of the weighted correlation coefficient's
$\nu_p^{(l)}$ and $\nu_p^{(u)}$ and their symmetrized versions
$\nu_p^{(s.l)}$ and $\nu_p^{(s.u)}$ defined by (\ref{pop}), could
be written in terms of the empirical copula $C_n$. By plugging
$C_n$ in (\ref{altl}), the empirical version of $ \nu_p^{(l)} $ is
of the form
\begin{equation*}
\tilde{\nu}_{n,p}^{(l)}=\frac{2(p+1)(p+2)}{p}\int_{0}^{1}\int_{0}^{1}(1-u)^p(1-v)dC_n(u,v)-\frac{p+2}{p}.
\end{equation*}
By using the representation (\ref{rwl1}) and the identity
\eqref{riemman}, straightforward calculations gives
\begin{eqnarray}\label{empl}
\tilde{\nu}_{n,p}^{(l)}\nonumber
&=&\frac{2(p+1)(p+2)}{np}\sum_{i=1}^n \left(1-\frac{i}{n+1}
\right)^p \left( 1-\frac{S_i}{n+1} \right)-\frac{p+2}{p}
\\\nonumber
&=&\frac{2(p+1)(p+2)\kappa_{n,p}}{np(n+1)^p}-\frac{2(p+1)(p+2)}{np(n+1)^{p+1}}\sum_{i=1}^n
S_i(n+1-i)^p-\frac{p+2}{p}\\\nonumber
&=&\frac{(p+1)(p+2)\kappa_{n,p}}{np(n+1)^p}+\frac{(p+1)(p+2)(2\kappa_{n,p+1}-(n+1)\kappa_{n,p})}{np(n+1)^{p+1}}
\nu_{n,p}^{(l)}-\frac{p+2}{p}\\&=&(1+O(n^{-1}))\nu_{n,p}^{(l)}+O(n^{-1}).
\end{eqnarray}
By using (\ref{altu}), a similar argument shows that the empirical
version of the coefficient $ \nu_p^{(u)}$ is given by
\begin{eqnarray}\label{empu}
 \tilde{\nu}_{n,p}^{(u)}&=&\nonumber
\frac{2(p+1)(p+2)}{np}\sum_{i=1}^n\left( 1-\left( \frac{i}{n+1}
\right)^p\right) \left( 1-\frac{S_i}{n+1} \right)-(p+2) \\
&=&(1+O(n^{-1}))\nu_{n,p}^{(u)}+O(n^{-1}).
\end{eqnarray}
In the following we provide the asymptotic distribution of the WRC
measures $\nu_{n,p}^{(l)}$, $\nu_{n,p}^{(u)}$ and their
symmetrized versions $\nu_{n,p}^{(s.l)}$, $\nu_{n,p}^{(s.u)}$. As
shown by Segers \cite{Segers2012}, $C_n$ converges weakly to $C$
as $n\rightarrow \infty$, whenever $C$ is regular, that is, the
partial derivatives $C_1(u,v)=\partial C(u,v)/\partial u$ and
$C_2(u,v)=\partial C(u,v)/\partial v$ exist everywhere on
$[0,1]^2$ and $C_1$ and $C_2$ are continuous on $(0,1)\times[0,1]$
and $[0,1]\times(0,1)$, respectively. Moreover, the empirical
copula process $\mathbb{C}_n=\sqrt{n}(C_n-C)$ converges weakly, as
$n\rightarrow \infty$, to a centered Gaussian process
$\hat{\mathbb{C}}$ on $[0,1]^2$, defined for all $u,v\in [0,1]$ by
\begin{equation}\label{mathbbcc}
\hat{\mathbb{C}}(u,v)=\mathbb{C}(u,v)-\frac{\partial }{{\partial
u}}C(u,v)\mathbb{C}(u,1)-\frac{\partial }{{\partial
v}}C(u,v)\mathbb{C}(1,v),
\end{equation}
where $\mathbb{C}(u,v)$ is Brownian bridge on $[0,1]^2 $ with the
covariance function
\begin{equation*}
E(\mathbb{C}(u,v)\mathbb{C}(s,t))=C(\min(u,s),\min(v,t))-C(u,v)C(s,t).
\end{equation*}
\begin{theorem}\label{theo.asy}
Suppose that $C$ is a regular copula. Then $ \sqrt n (\nu
_{n,p}^{(l)} - {\nu _p^{(l)}}) $, $\sqrt n (\nu_{n,p}^{(u)} - {\nu
_p^{(u)}}) $, $\sqrt n (\nu _{n,p}^{(s.u)} - {\nu_p^{(s.u)}}) $
and $\sqrt n (\nu _{n,p}^{(s.l)} - {\nu _p^{(s.l)}}) $ are
asymptotically centered normal with the asymptotic variances,
given by
\begin{small}
\begin{align}
(\sigma_p^{(l)})^2=&4(p+1)^2(p+2)^2\int_{[0,1]^4}(1-u)^{p-1}(1-s)^{p-1}E\left(\hat{\mathbb{C}}(u,v)\hat{\mathbb{C}}(s,t)\right)dudvdsdt,\\\nonumber
(\sigma_p^{(u)})^2=&4(p+1)^2(p+2)^2\int_{[0,1]^4}
u^{p-1}s^{p-1}E\left(\hat{\mathbb{C}}(u,v)\hat{\mathbb{C}}(s,t)\right)dudvdsdt,\\\nonumber
(\sigma_p^{(s.u)})^2=&(p+1)^2(p+2)^2\int_{[0,1]^4}
(u^{p-1}+v^{p-1})(s^{p-1}+t^{p-1})E\left(\hat{\mathbb{C}}(u,v)\hat{\mathbb{C}}(s,t)\right)dudvdsdt,\\\nonumber
(\sigma_p^{(s.l)})^2=&(p+1)^2(p+2)^2\int_{[0,1]^4}((u-1)^{p-1}+(v-1)^{p-1})((s-1)^{p-1}+(t-1)^{p-1})\cr
 & \,\,\,\,\,\,\,\,\,\,\,\,\,\,\,\,\,\,\,\,\,\,\,\,\,\,\,\,\,\,\,\,\,\,\,\,\,\,\,\,\,\,\,\,\,\,\,\,\,\,\,\,\,\,\,\,\,\,\,\,\,\,\,\, \times E
 \left(\hat{\mathbb{C}}(u,v)\hat{\mathbb{C}}(s,t)\right)dudvdsdt,
\end{align}
\end{small}
where $\hat{\mathbb{C}}$ is the Gaussian process defined by
\eqref{mathbbcc}.
\end{theorem}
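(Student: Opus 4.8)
The plan is to realize each population coefficient as a bounded linear functional of the copula and then transport the weak convergence of the empirical copula process through that functional by the continuous mapping theorem. Fix $p\ge 2$ (the case $p=1$ being the classical Spearman's rho). For a bounded function $\alpha$ on $[0,1]^2$ set
\[
\Phi_p^{(l)}(\alpha)=2(p+1)(p+2)\int_{[0,1]^2}(1-u)^{p-1}\alpha(u,v)\,du\,dv,
\]
and define $\Phi_p^{(u)},\Phi_p^{(s.u)},\Phi_p^{(s.l)}$ in the same way, with the weight $2(p+1)(p+2)(1-u)^{p-1}$ replaced by $2(p+1)(p+2)u^{p-1}$, by $(p+1)(p+2)(u^{p-1}+v^{p-1})$ and by $(p+1)(p+2)((1-u)^{p-1}+(1-v)^{p-1})$, according to \eqref{pop}. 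Then $\nu_p^{(l)}=\Phi_p^{(l)}(C-\Pi)$, and similarly for the other three coefficients. Plugging the empirical copula into these integrals and evaluating the Lebesgue integral of the step function $C_n$ as a finite sum reproduces exactly the empirical coefficients $\tilde\nu_{n,p}^{(l)}$ and $\tilde\nu_{n,p}^{(u)}$ of \eqref{empl} and \eqref{empu}; that is, $\Phi_p^{(l)}(C_n-\Pi)=\tilde\nu_{n,p}^{(l)}$ and $\Phi_p^{(u)}(C_n-\Pi)=\tilde\nu_{n,p}^{(u)}$, and analogously for the symmetrized coefficients. Combining this with \eqref{empl}, \eqref{empu} and the boundedness $\nu_{n,p}^{(l)},\nu_{n,p}^{(u)}\in[-1,1]$ yields $\nu_{n,p}^{(l)}-\Phi_p^{(l)}(C_n-\Pi)=O(n^{-1})$, so $\sqrt n\big(\nu_{n,p}^{(l)}-\Phi_p^{(l)}(C_n-\Pi)\big)\to 0$. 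By Slutsky's theorem it therefore suffices to find the limit law of
\[
\sqrt n\big(\Phi_p^{(l)}(C_n-\Pi)-\nu_p^{(l)}\big)=\Phi_p^{(l)}(\mathbb{C}_n),\qquad \mathbb{C}_n=\sqrt n(C_n-C),
\]
and the same reduction applies verbatim to the other three statistics.

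Next I would apply the continuous mapping theorem. For each fixed $p$ the four weights are bounded on $[0,1]^2$, so every functional is Lipschitz for the supremum norm; for instance
\[
\big|\Phi_p^{(l)}(\alpha)-\Phi_p^{(l)}(\beta)\big|\le \frac{2(p+1)(p+2)}{p}\,\|\alpha-\beta\|_{\infty},
\]
and hence $\Phi_p^{(l)}$ is continuous on $\ell^{\infty}([0,1]^2)$. Since $C$ is regular, Segers' theorem gives the weak convergence of the empirical copula process $\mathbb{C}_n$ to the centered Gaussian limit $\hat{\mathbb{C}}$ of \eqref{mathbbcc} in $\ell^{\infty}([0,1]^2)$. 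Applying the continuous mapping theorem to the continuous linear maps $\Phi_p^{(l)},\Phi_p^{(u)},\Phi_p^{(s.u)},\Phi_p^{(s.l)}$ shows that $\Phi_p^{(l)}(\mathbb{C}_n)$ converges weakly to $\Phi_p^{(l)}(\hat{\mathbb{C}})$, and likewise for the other three.

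It remains to identify the limits. A continuous linear functional of a centered Gaussian process is a centered Gaussian random variable, so each limit is normal with mean zero, the vanishing of the mean following from $E\hat{\mathbb{C}}(u,v)=0$ together with Fubini's theorem. Squaring the integral representation of $\Phi_p^{(l)}(\hat{\mathbb{C}})$ and interchanging expectation with integration (again by Fubini, justified by boundedness of the weight and integrability of the covariance kernel) gives
\[
\mathrm{Var}\big(\Phi_p^{(l)}(\hat{\mathbb{C}})\big)=4(p+1)^2(p+2)^2\int_{[0,1]^4}(1-u)^{p-1}(1-s)^{p-1}E\big(\hat{\mathbb{C}}(u,v)\hat{\mathbb{C}}(s,t)\big)\,du\,dv\,ds\,dt,
\]
which is the stated $(\sigma_p^{(l)})^2$; the variances $(\sigma_p^{(u)})^2$, $(\sigma_p^{(s.u)})^2$ and $(\sigma_p^{(s.l)})^2$ follow by inserting the respective weights, the sign factor appearing in $(u-1)^{p-1}$ being squared inside the double product and therefore immaterial.

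The step I expect to be the main obstacle is the rigorous use of the continuous mapping theorem in the correct topology: one must check that each $\Phi_p^{(\cdot)}$ is continuous for the $\ell^{\infty}$-norm in which Segers' result delivers convergence (with the outer-measure notion of weak convergence appropriate to the non-measurable empirical copula process), and confirm that the regularity hypothesis on $C$ is exactly what guarantees the limit \eqref{mathbbcc}. Once the weak limit is secured, identifying it as Gaussian and computing the variances is routine.
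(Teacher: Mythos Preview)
Your proposal is correct and follows essentially the same route as the paper: use \eqref{empl}--\eqref{empu} to replace $\nu_{n,p}^{(\cdot)}$ by the plug-in estimators $\tilde\nu_{n,p}^{(\cdot)}$ up to $O(n^{-1/2})$, recognize $\sqrt n(\tilde\nu_{n,p}^{(\cdot)}-\nu_p^{(\cdot)})$ as a bounded linear functional of the empirical copula process $\sqrt n(C_n-C)$, and invoke Segers' weak convergence result together with the continuous mapping theorem. The paper's proof is a condensed version of exactly this argument; you have simply made explicit the Lipschitz bound in $\ell^\infty([0,1]^2)$, the Slutsky step, and the Fubini justification for the variance formula, all of which the paper leaves implicit in the phrase ``linear and continuous functional of the empirical copula process''.
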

\begin{proof}
We prove the result for $\sqrt n(\nu_{n,p}^{(l)} - {\nu _p^{(l)}})
$, similar arguments hold for $\sqrt n(\nu_{n,p}^{(u)} - {\nu
_p^{(u)}}) $, $\sqrt n (\nu _{n,p}^{(s.u)} -{\nu_p^{(s.u)}}) $ and
$\sqrt n (\nu _{n,p}^{(s.l)} - {\nu_p^{(s.l)}})$. From
(\ref{empl}) we have
\begin{align*}
\sqrt n & (\nu _{n,p}^{(l)} - {\nu _p}^{(l)}) = (1+O(n^{-1}))\sqrt
n (\tilde{\nu} _{n,p}^{(l)} - {\nu _p}^{(l)})+O(n^{-1/2}) \cr
&=2(1+O(n^{-1}))(p+1)(p+2)\int_0^1\int_0^1(1-u)^{p-1} \left[ \sqrt
n(C_n(u,v)-C(u,v)) \right] dudv+ O(n^{-1/2}).
\end{align*}
Since the integral on the right side is a linear and continuous
functional of the empirical copula process, then the left hand
side is asymptotically centered normal with asymptotic variance $
(\sigma_p^{(l)})^2 $, as stated in the theorem.
\end{proof}
\begin{corollary}
Assume the null hypothesis of independence i.e. $ C (u, v) = uv $.
Then $ \sqrt n  \nu_{n,p}^{(l)} $,
 $\sqrt{n} \nu_{n,p}^{(u)}$, $\sqrt{n} \nu_{n,p}^{(s.l)} $ and $ \sqrt{n} \nu_{n,p}^{(s.u)} $ are asymptotically centered normal
with the asymptotic standard deviations $
\sigma_p^{(l)}=\sigma_p^{(u)}=\frac{(p+2)}{\sqrt{3(2p+1)}} $ and
$\sigma_p^{(s.l)}=\sigma_p^{(s.u)}=\sqrt{\frac{p^2+10p+7}{6(2p+1)}}
$.
\end{corollary}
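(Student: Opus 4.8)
The plan is to obtain the corollary as the specialization of Theorem \ref{theo.asy} to the independence copula $C=\Pi$, so that the entire task reduces to evaluating the four variance integrals when $C(u,v)=uv$. First I would note that under independence each population coefficient vanishes: since $C-\Pi\equiv 0$, the representations \eqref{pop} give $\nu_p^{(l)}=\nu_p^{(u)}=\nu_p^{(s.l)}=\nu_p^{(s.u)}=0$. Hence the centering in Theorem \ref{theo.asy} is automatic, and that theorem already delivers the asymptotic centered normality of $\sqrt n\,\nu_{n,p}^{(l)}$, $\sqrt n\,\nu_{n,p}^{(u)}$, $\sqrt n\,\nu_{n,p}^{(s.l)}$ and $\sqrt n\,\nu_{n,p}^{(s.u)}$; only the variance constants remain to be identified.

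The key step is to evaluate the limiting covariance kernel $E(\hat{\mathbb{C}}(u,v)\hat{\mathbb{C}}(s,t))$ at $C=\Pi$. Here $\partial C/\partial u=v$ and $\partial C/\partial v=u$, so \eqref{mathbbcc} reduces to $\hat{\mathbb{C}}(u,v)=\mathbb{C}(u,v)-v\,\mathbb{C}(u,1)-u\,\mathbb{C}(1,v)$. Expanding the product into its nine cross terms and substituting the Brownian-bridge covariance $E(\mathbb{C}(u,v)\mathbb{C}(s,t))=\min(u,s)\min(v,t)-uv\,st$, I would exploit in particular that $E(\mathbb{C}(u,1)\mathbb{C}(1,t))=0$, i.e. the two marginal bridges are orthogonal. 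After the cancellations the kernel collapses to the product form
\[
E\bigl(\hat{\mathbb{C}}(u,v)\hat{\mathbb{C}}(s,t)\bigr)=\bigl[\min(u,s)-us\bigr]\bigl[\min(v,t)-vt\bigr].
\]
I expect this factorization to be the main obstacle, since it requires tracking the sign and the weight of each of the nine terms; once it is in hand, the rest is elementary.

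Finally I would insert this kernel into the four variance formulas of Theorem \ref{theo.asy}. Because the kernel factorizes and each integrand's weight splits between the pairs $(u,s)$ and $(v,t)$, every quadruple integral becomes a product of two planar integrals, reducing everything to the three elementary quantities
\begin{gather*}
J=\int_0^1\!\int_0^1\bigl[\min(v,t)-vt\bigr]\,dv\,dt, \qquad
L=\int_0^1\!\int_0^1 u^{p-1}\bigl[\min(u,s)-us\bigr]\,du\,ds, \\
K=\int_0^1\!\int_0^1 u^{p-1}s^{p-1}\bigl[\min(u,s)-us\bigr]\,du\,ds.
\end{gather*}
Direct computation gives $J=\tfrac{1}{12}$, $K=\tfrac{1}{(p+1)^2(2p+1)}$ and $L=\tfrac{1}{2(p+1)(p+2)}$. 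Then $(\sigma_p^{(l)})^2=(\sigma_p^{(u)})^2=4(p+1)^2(p+2)^2\,KJ=\tfrac{(p+2)^2}{3(2p+1)}$ (the reflection $u\mapsto 1-u$ carries the weight $(1-u)^{p-1}(1-s)^{p-1}$ into $u^{p-1}s^{p-1}$ while preserving the kernel, which is why the two coincide). For the symmetrized versions, expanding $(u^{p-1}+v^{p-1})(s^{p-1}+t^{p-1})$ produces two $KJ$-type and two $L^2$-type contributions, so that $(\sigma_p^{(s.u)})^2=(p+1)^2(p+2)^2(2KJ+2L^2)=\tfrac{p^2+10p+7}{6(2p+1)}$, and the same reflection argument gives $(\sigma_p^{(s.l)})^2=(\sigma_p^{(s.u)})^2$. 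Taking square roots yields the standard deviations stated in the corollary.
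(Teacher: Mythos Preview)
Your proposal is correct and follows essentially the same approach as the paper: specialize Theorem~\ref{theo.asy} to $C=\Pi$, observe that the limiting covariance kernel factorizes as $(\min(u,s)-us)(\min(v,t)-vt)$, and then integrate. The paper states this kernel without derivation and dismisses the rest as ``routine integration''; you have simply made that routine integration explicit by isolating the elementary integrals $J$, $K$, $L$ and invoking the reflection $u\mapsto 1-u$ to identify the $(l)$ and $(u)$ variances.
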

\begin{proof}
For $ C (u, v) = uv $ the covariance function of the limiting
Gaussian process $ \hat{\mathbb{C}} $ takes the form  $
E(\hat{\mathbb{C}}(u,v)\hat{\mathbb{C}}(s,t))=(\min(u,s)-us)(\min(v,t)-vt).
$ An application of Theorem 4.1 and a routine integration gives
the result.
\end{proof}
In order to use the proposed WRC measures for testing
independence, one needs to find their distributions or the
quantiles of the distribution under the hypothesis of
independence. The following result provides the expectation and
variance of $\nu_{n,p}^{(l)}$. Similar result could be found for
$\nu_{n,p}^{(u)}$, $\nu_{n,p}^{(s.l)}$ and $\nu_{n,p}^{(s.u)}$.
\begin{theorem}\label{asno} Under the hypothesis of independence
between two sets of ranks
\begin{equation*}
E(\nu_{n,p}^{(l)})=0,\quad
var(\nu_{n,p}^{(l)})=\frac{n(n+1)}{3}\frac{\kappa_{n,2p}-\frac{1}{n}(\kappa_{n,p})^2}{(2
\kappa_{n,p+1}-(n+1)\kappa_{n,p})^{2}}.
\end{equation*}
\end{theorem}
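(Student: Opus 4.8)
The plan is to work directly with the finite-sample representation of $\nu_{n,p}^{(l)}$ given in \eqref{rwl1}, rather than with the copula-based limit. Under the null hypothesis of independence, the ranks $(S_1,\dots,S_n)$ form a uniformly random permutation of $(1,\dots,n)$, so the only source of randomness is this permutation. Writing
\[
\nu_{n,p}^{(l)}=\frac{(n+1)\kappa_{n,p}-2\sum_{i=1}^n S_i(n+1-i)^p}{2\kappa_{n,p+1}-(n+1)\kappa_{n,p}},
\]
I would treat the denominator as a deterministic constant and reduce the problem to computing the first two moments of the linear statistic $T=\sum_{i=1}^n S_i(n+1-i)^p$. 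Setting $a_i=(n+1-i)^p$ and recalling that each $S_i$ is a uniform draw (marginally) from $\{1,\dots,n\}$, the numerator is an affine function of $T$, and the variance of $\nu_{n,p}^{(l)}$ equals $4\,\mathrm{var}(T)$ divided by the squared denominator.

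\textbf{The mean.} For the expectation, I would use $E(S_i)=(n+1)/2$ for every $i$, so that
\[
E(T)=\frac{n+1}{2}\sum_{i=1}^n (n+1-i)^p=\frac{n+1}{2}\,\kappa_{n,p},
\]
since reindexing $j=n+1-i$ turns $\sum_i(n+1-i)^p$ into $\sum_j j^p=\kappa_{n,p}$. Substituting into the numerator shows it has expectation $(n+1)\kappa_{n,p}-2\cdot\frac{n+1}{2}\kappa_{n,p}=0$, giving $E(\nu_{n,p}^{(l)})=0$ immediately.

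\textbf{The variance — the main obstacle.} The real work is $\mathrm{var}(T)=\mathrm{var}\!\left(\sum_i a_i S_i\right)$ for a uniformly random permutation, which is a classical permutation-variance computation. Because the $S_i$ are not independent, I would expand
\[
\mathrm{var}(T)=\sum_i a_i^2\,\mathrm{var}(S_i)+\sum_{i\neq j}a_i a_j\,\mathrm{cov}(S_i,S_j),
\]
and use the standard sampling-without-replacement moments: $\mathrm{var}(S_i)=\frac{(n+1)(n-1)}{12}$ and $\mathrm{cov}(S_i,S_j)=-\frac{n+1}{12}$ for $i\neq j$. The key simplification is that the covariance structure is exchangeable, so
\[
\mathrm{var}(T)=\frac{n+1}{12}\Bigl[(n-1)\textstyle\sum_i a_i^2-\bigl(\sum_i a_i\bigr)^2+\sum_i a_i^2\Bigr]
=\frac{n+1}{12}\Bigl[n\sum_i a_i^2-\bigl(\sum_i a_i\bigr)^2\Bigr].
\]
Now $\sum_i a_i^2=\sum_i(n+1-i)^{2p}=\kappa_{n,2p}$ and $\sum_i a_i=\kappa_{n,p}$ after the same reindexing, so $\mathrm{var}(T)=\frac{n+1}{12}\bigl(n\,\kappa_{n,2p}-(\kappa_{n,p})^2\bigr)$. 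Multiplying by $4$ over the squared denominator yields
\[
\mathrm{var}(\nu_{n,p}^{(l)})=\frac{4\cdot\frac{n+1}{12}\bigl(n\kappa_{n,2p}-(\kappa_{n,p})^2\bigr)}{(2\kappa_{n,p+1}-(n+1)\kappa_{n,p})^2}
=\frac{n(n+1)}{3}\cdot\frac{\kappa_{n,2p}-\frac{1}{n}(\kappa_{n,p})^2}{(2\kappa_{n,p+1}-(n+1)\kappa_{n,p})^2},
\]
matching the claimed formula. I expect the only delicate points to be getting the exact sampling-without-replacement covariance constants right and carefully tracking the factor of $4$ and the reindexing $j=n+1-i$; everything else is bookkeeping.
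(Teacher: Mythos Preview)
Your proof is correct and follows essentially the same route as the paper: both recognize $\nu_{n,p}^{(l)}$ as an affine function of the linear rank statistic $\sum_i S_i(n+1-i)^p$ and compute its mean and variance under a uniformly random permutation. The only difference is that the paper simply cites the standard mean/variance formula for linear rank statistics (Theorem~1, p.~57 of \v{S}id\'ak--Sen--H\'ajek, or H\'ajek's \emph{Course}), whereas you derive those permutation moments from scratch via $\mathrm{var}(S_i)=\frac{(n+1)(n-1)}{12}$ and $\mathrm{cov}(S_i,S_j)=-\frac{n+1}{12}$; the computations and the final algebra are identical.
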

\begin{proof}
We note that the WRC measure $\nu_{n,p}^{(l)}$ by \eqref{rwl} can
be written as a linear combination of the linear rank statistic of
the form $ a_n+b_n\sum\limits_{i = 1}^n {a(i,S_i)} $, where
\begin{eqnarray*}
a_n&=&1+\left(2\sum\limits_{i = 1}^n i(n + 1 -
i)^p\right)\left(\sum\limits_{i = 1}^n (n + 1 - 2i)(n + 1 -
i)^p\right)^{-1}
\\
b_n&=&-2\left(\sum\limits_{i = 1}^n (n + 1 - 2i)(n + 1 -
i)^p\right)^{-1},
\end{eqnarray*}
and $ a(i,S_i)= S_i(n+1-i)^p$. The mean and the variance of the
quantity $S=\sum\limits_{i = 1}^n{a(i,S_i)}$ can be obtained, for
example, by using Theorem 1 in p. 57 in \cite{Sidak1999}. See,
also \cite{Hajek1969}.
\end{proof}
The exact and asymptotic variances of the normalized WRC measures
$\sqrt n \nu_{n,p}^{(l)},\sqrt n \nu_{n,p}^{(u)}$, $\sqrt n
\nu_{n,p}^{(s.l)}$ and $\sqrt n \nu_{n,p}^{(s.u)}$, under the
assumption of independence are provided in Table \ref{var}, for
$p=1,2,3,4,5$. According to the results of Table \ref{var} it
seems that the variance of the symmetric versions of WRC measures
are less than that of their asymmetric versions. They are more
appropriate for testing independence of two random variables.
\begin{table}\label{var}
\caption{\small Variance of the normalized WRC measures $\sqrt n
\nu_{n,p}^{(l)},\sqrt n \nu_{n,p}^{(u)}$, $\sqrt n
\nu_{n,p}^{(s.l)}$ and $\sqrt n \nu_{n,p}^{(s.u)}$, under the
assumption of independence, for $p=1,2,3,4,5$. }
\begin{center}
\small \scalebox{0.9}{
\begin{tabular}[l]{@{}|l|c|c|}
\hline
Index & The exact variance & Asymptotic variance   \\
\hline
$\sqrt n \nu_{n,1}^{(l)}$& $ \frac{n}{n-1} $ & $ 1 $ \\
\hline
$\sqrt n \nu_{n,2}^{(l)}$&$ \frac{n}{15}\frac { \left( 2\,n+1 \right)  \left( 8\,n+11 \right) }{ \left( n+1 \right) ^{2} \left( n-1 \right) } $&$\frac{16}{15}$\\
$\sqrt n \nu_{n,2}^{(u)}$&$ \frac{n}{15}\frac {16\,{n}^{2}-30\,n+11}{ \left( n-1 \right) ^{3}} $&$\frac{16}{15}$\\
$\sqrt n \nu_{n,2}^{(s.l)}$&$ \frac{n}{30}\frac {31\,{n}^{2}+60\,n+26}{ \left( n+1 \right) ^{2} \left( n-1 \right) } $&$\frac{31}{30}$\\
$\sqrt n \nu_{n,2}^{(s.u)}$&$ \frac{n}{30}\frac {31\,{n}^{2}-60\,n+26}{ \left( n-1 \right) ^{3}} $&$\frac{31}{30}$\\
\hline $\sqrt n \nu_{n,3}^{(l)}$& $ \frac{{25n}}{7}\frac{{27{n^4}
+ 84{n^3}
+ 69{n^2} - 8}}{{(n - 1){{(9{n^2} + 15n + 4)}^2}}} $ &  $\frac{25}{21}$ \\
$\sqrt n  \nu_{n,3}^{(u)} $& $ \frac{{25n}}{7}\frac{{27{n^4} -
84{n^3} +
69{n^2} - 8}}{{(n - 1){{(9{n^2} - 15n + 4)}^2}}} $& $\frac{25}{21}$\\
$\sqrt n  \nu_{n,3}^{(s.l)} $&$\frac{621n^5+1995n^4+1902n^3+420n^2-44n}{7(3n+1)^2(3n+4)^2(n-1)}$&$\frac{23}{21}$\\
$\sqrt n  \nu_{n,3}^{(s.u)} $&$ \frac{621n^5-1995n^4+1902n^3-420n^2-44n}{7(3n-4)^2(3n-1)^2(n-1)} $&$\frac{23}{21}$\\
\hline
$\sqrt n \nu_{n,4}^{(l)}$&$ \frac{n}{3}\frac{\left( 32\,{n}^{5}+119\,{n}^{4}+100\,{n}^{3}-65\,{n}^{2}-62\,n+31 \right)  \left( 2\,n+1 \right) }{ \left( 4\,{n}^{2}+5\,n-1 \right) ^{2} \left( n-1 \right)  \left( n+1 \right) ^{2}} $&$ \frac{4}{3} $\\
$\sqrt n \nu_{n,4}^{(u)}$&$\frac{n}{3} {\frac {64\,{n}^{6}-270\,{n}^{5}+319\,{n}^{4}+30\,{n}^{3}-189\,{n}^{2}+31}{ \left( 4\,{n}^{2}-5\,n-1 \right) ^{2} \left( n-1 \right) ^{3}}} $&$ \frac{4}{3} $\\
$\sqrt n \nu_{n,4}^{(s.l)}$&$ \frac{n}{6}\frac {112\,{n}^{6}+486\,{n}^{5}+658\,{n}^{4}+162\,{n}^{3}-195\,{n}^{2}-24\,n+34}{ \left( 4\,{n}^{2}+5\,n-1 \right) ^{2}\left( n-1 \right)  \left( n+1 \right) ^{2}} $&$ \frac{7}{6} $\\
$\sqrt n \nu_{n,4}^{(s.u)}$&$ \frac{n}{6}\frac {112\,{n}^{6}-486\,{n}^{5}+658\,{n}^{4}-162\,{n}^{3}-195\,{n}^{2}+24\,n+34}{ \left( 4\,{n}^{2}-5\,n-1 \right) ^{2} \left( n-1 \right) ^{3}} $&$ \frac{7}{6} $\\
\hline
$\sqrt n \nu_{n,5}^{(l)}$&$ {\frac {4900\,{n}^{9}+25872\,{n}^{8}+39396\,{n}^{7}-6468\,{n}^{6}-49539\,{n}^{5}+33467\,{n}^{3}-5880n}{ \left( 33\,n-33 \right)\left( 10\,{n}^{4}+28\,{n}^{3}+17\,{n}^{2}-7\,n-4 \right) ^{2}}} $&$ \frac{49}{33} $\\
$\sqrt n \nu_{n,5}^{(u)}$&$ {\frac {4900\,{n}^{9}-25872\,{n}^{8}+39396\,{n}^{7}+6468\,{n}^{6}-49539\,{n}^{5}+33467\,{n}^{3}-5880n}{ \left( 33\,n-33 \right)\left( 10\,{n}^{4}-28\,{n}^{3}+17\,{n}^{2}+7\,n-4 \right) ^{2}}} $&$ \frac{49}{33} $\\
$\sqrt n \nu_{n,5}^{(s.l)}$&$ \frac{n}{33}\frac {4100\,{n}^{8}+22176\,{n}^{7}+38244\,{n}^{6}+10164\,{n}^{5}-27789\,{n}^{4}-7623\,{n}^{3}+15298\,{n}^{2}+924\,n-2676}{ \left( n-1 \right)  \left( 10\,{n}^{4}+28\,{n}^{3}+17\,{n}^{2}-7\,n-4 \right) ^{2}} $&$ \frac{41}{33} $\\
$\sqrt n \nu_{n,5}^{(s.u)}$&$ \frac{n}{33}\frac {4100\,{n}^{8}-22176\,{n}^{7}+38244\,{n}^{6}-10164\,{n}^{5}-27789\,{n}^{4}+7623\,{n}^{3}+15298\,{n}^{2}-924\,n-2676}{ \left( n-1 \right)  \left( 10\,{n}^{4}-28\,{n}^{3}+17\,{n}^{2}+7\,n-4 \right) ^{2}} $&$ \frac{41}{33} $\\
\hline
\end{tabular}}
\end{center}
\end{table}
Under the assumption of independence, all $ n! $ orderings of a
set of rank $ (S_1,S_2,...,S_n) $ are equally likely to occur.
After calculating the value of the WRC measures between the two
rankings $ (1,2,...,n) $ and $ (S_1,S_2,...,S_n) $, their
quantiles are obtained by considering that the atom of the
discrete distribution is $ \frac{1}{n !} $. Here the $ r-
$quantiles in the data with $ x_i= $ the value of the specific WRC
measure in the $ i $-th paired samples, $i=1,2,..., n! $ are
calculated by using $(1-\gamma) x_{(j)}+\gamma x_{(j+1)},$ where $
\frac{j}{n} \leq r < \frac{j+1}{n} $, $x_{(j)}$ is the $j$th order
statistic and $\gamma=0.5$ if $nr=j$, and 1 otherwise. This
algorithm discussed in Hyndman and Fan \cite{Hyndman1996} as one
of the common methods for calculating quantiles for discontinuous
sample in statistical packages. The quantiles of normalized WRC
measures $\sqrt{n} \nu_{n,p}^{(l)}$, $\sqrt{n} \nu_{n,p}^{(u)}$
and their symmetrized versions $\sqrt{n} \nu_{n,p}^{(s.l)}$,
$\sqrt{n} \nu_{n,p}^{(s.u)}$ for $p=1,...,5 $ and $n=5,6,...,10$,
are given in Tables 4-5.

\begin{table}\label{ex.qu1}
\caption{\small The quantiles of normalized WRC measures $\sqrt{n}
\nu_{n,p}^{(l)}$ and $\sqrt{n} \nu_{n,p}^{(s.l)}$ for $p=1,...,5 $
and $n=5,6,...,10$.}
\centering \scriptsize
\begin{tabular}{|rr|lrrr|rrrr|}
\hline
   &  & $\sqrt n \nu_{n,p}^{(l)}$ &  &  &  & $\sqrt n \nu_{n,p}^{(s.l)}$  &   &  &  \\
  \hline
   $n$ & $p$ & $ 90\% $ & $ 95\% $ & $ 97.5\% $ & $ 99\% $ &  $ 90\% $ &  $ 95\% $ & $ 97.5\% $ & $ 99\% $ \\
 \hline
    n=5 & 1 & 1.565 & 1.789 & 2.012 & 2.012 & 1.565 & 1.789 & 2.012 & 2.012 \\
      & 2 & 1.565 & 1.845 & 2.012 & 2.124 & 1.565 & 1.826 & 2.012 & 2.124 \\
      &3 & 1.600 & 1.909 & 2.045 & 2.185 & 1.618 & 1.931 & 2.030 & 2.185 \\
      & 4 & 1.658 & 1.984 & 2.116 & 2.214 & 1.636 & 1.984 & 2.097 & 2.214 \\
       & 5 & 1.730 & 2.041 & 2.161 & 2.226 & 1.676 & 2.041 & 2.147 & 2.226 \\
      \hline
     n= 6 & 1 & 1.470 & 1.890 & 2.030 & 2.170 & 1.470 & 1.890 & 2.030 & 2.170 \\
       & 2 & 1.550 & 1.830 & 2.040 & 2.230 & 1.550 & 1.850 & 2.030 & 2.210 \\
       & 3 & 1.599 & 1.897 & 2.103 & 2.275 & 1.599 & 1.934 & 2.101 & 2.275 \\
       & 4 & 1.674 & 1.974 & 2.177 & 2.340 & 1.628 & 1.974 & 2.161 & 2.340 \\
       & 5 & 1.791 & 1.985 & 2.227 & 2.368 & 1.727 & 1.972 & 2.233 & 2.368 \\
      \hline
      n=7 & 1 & 1.465 & 1.795 & 1.984 & 2.268 & 1.465 & 1.795 & 1.984 & 2.268 \\
       & 2 & 1.500 & 1.831 & 2.079 & 2.268 & 1.488 & 1.819 & 2.079 & 2.268 \\
       & 3 & 1.569 & 1.891 & 2.129 & 2.323 & 1.551 & 1.877 & 2.112 & 2.323 \\
       & 4 & 1.656 & 1.980 & 2.196 & 2.378 & 1.620 & 1.954 & 2.204 & 2.366 \\
       & 5 & 1.725 & 2.078 & 2.253 & 2.436 & 1.707 & 2.019 & 2.248 & 2.424 \\
     \hline
      n=8 & 1 & 1.414 & 1.751 & 2.020 & 2.290 & 1.414 & 1.751 & 2.020 & 2.290 \\
       & 2 & 1.467 & 1.818 & 2.073 & 2.312 & 1.452 & 1.818 & 2.065 & 2.312 \\
       & 3 & 1.541 & 1.895 & 2.144 & 2.372 & 1.514 & 1.885 & 2.129 & 2.367 \\
       & 4 & 1.617 & 1.991 & 2.214 & 2.443 & 1.572 & 1.968 & 2.198 & 2.431 \\
       & 5 & 1.704 & 2.067 & 2.291 & 2.500 & 1.644 & 2.035 & 2.275 & 2.481 \\
     \hline
      n=9 & 1 & 1.400 & 1.750 & 2.050 & 2.300 & 1.400 & 1.750 & 2.050 & 2.300 \\
       & 2 & 1.445 & 1.800 & 2.070 & 2.330 & 1.430 & 1.795 & 2.070 & 2.330 \\
       & 3 & 1.523 & 1.884 & 2.152 & 2.404 & 1.488 & 1.861 & 2.137 & 2.395 \\
       & 4 & 1.611 & 1.973 & 2.238 & 2.479 & 1.548 & 1.938 & 2.212 & 2.466 \\
      & 5 & 1.694 & 2.061 & 2.314 & 2.550 & 1.612 & 2.019 & 2.284 & 2.532 \\
     \hline
     n=10 & 1 & 1.399 & 1.744 & 2.012 & 2.319 & 1.399 & 1.744 & 2.012 & 2.319 \\
      & 2 & 1.434 & 1.789 & 2.072 & 2.350 & 1.416 & 1.779 & 2.065 & 2.343 \\
      & 3 & 1.511 & 1.873 & 2.156 & 2.429 & 1.469 & 1.844 & 2.137 & 2.416 \\
      & 4 & 1.599 & 1.965 & 2.245 & 2.511 & 1.528 & 1.920 & 2.216 & 2.491 \\
      & 5 & 1.683 & 2.054 & 2.333 & 2.587 & 1.589 & 1.998 & 2.298 & 2.561 \\
\hline\hline
     $ n=\infty $  &   1 & 1.282 & 1.645 & 1.960 & 2.326 & 1.282 & 1.645 & 1.960 & 2.326 \\
       &2 & 1.324 & 1.699 & 2.024 & 2.403 & 1.303 & 1.672 & 1.992 & 2.365 \\
       &3 & 1.398 & 1.795 & 2.138 & 2.538 & 1.341 & 1.721 & 2.051 & 2.435 \\
       &4 & 1.480 & 1.899 & 2.263 & 2.686 & 1.384 & 1.777 & 2.117 & 2.513 \\
       &5 & 1.562 & 2.004 & 2.388 & 2.835 & 1.428 & 1.833 & 2.185 & 2.593 \\
\hline
\end{tabular}
\end{table}
\clearpage
\begin{table}\label{ex.qu1}
\caption{\small The quantiles of normalized WRC measures $
\sqrt{n} \nu_{n,p}^{(u)}$ and $ \sqrt{n} \nu_{n,p}^{(s.u)}$ for
$p=1,...,5 $ and $n=5,6,...,10$.} \centering \scriptsize
\begin{tabular}{|rr|lrrr|rrrr|}
\hline
   &  & $\sqrt n \nu_{n,p}^{(u)}$ &  &  &  & $\sqrt n \nu_{n,p}^{(s.u)}$  &   &  &  \\
  \hline
   $n$ & $p$ & $ 90\% $ & $ 95\% $ & $ 97.5\% $ & $ 99\% $ &  $ 90\% $ &  $ 95\% $ & $ 97.5\% $ & $ 99\% $ \\
 \hline
    n=5 & 1 & 1.565 & 1.789 & 2.012 & 2.012 & 1.565 & 1.789 & 2.012 & 2.012 \\
       & 2 & 1.593 & 1.873 & 2.012 & 2.180 & 1.565 & 1.901 & 2.012 & 2.180 \\
       & 3 & 1.663 & 1.982 & 2.120 & 2.222 & 1.633 & 1.982 & 2.098 & 2.222 \\
       & 4 & 1.749 & 2.053 & 2.176 & 2.232 & 1.690 & 2.053 & 2.162 & 2.232 \\
       & 5 & 1.865 & 2.102 & 2.205 & 2.235 & 1.814 & 2.102 & 2.197 & 2.235 \\
      \hline
      n=6 & 1 & 1.470 & 1.890 & 2.030 & 2.170 & 1.470 & 1.890 & 2.030 & 2.170 \\
       & 2 & 1.582 & 1.834 & 2.058 & 2.254 & 1.582 & 1.862 & 2.072 & 2.254 \\
       & 3 & 1.664 & 1.976 & 2.158 & 2.332 & 1.623 & 1.973 & 2.141 & 2.332 \\
       & 4 & 1.794 & 1.992 & 2.223 & 2.368 & 1.722 & 1.983 & 2.232 & 2.368 \\
       & 5 & 1.818 & 2.090 & 2.288 & 2.395 & 1.786 & 2.022 & 2.300 & 2.395 \\
      \hline
      n=7 & 1 & 1.465 & 1.795 & 1.984 & 2.268 & 1.465 & 1.795 & 1.984 & 2.268 \\
       & 2 & 1.528 & 1.858 & 2.095 & 2.284 & 1.528 & 1.858 & 2.087 & 2.284 \\
       & 3 & 1.611 & 1.940 & 2.182 & 2.365 & 1.579 & 1.936 & 2.187 & 2.357 \\
       & 4 & 1.715 & 2.070 & 2.249 & 2.427 & 1.676 & 2.012 & 2.240 & 2.410 \\
       & 5 & 1.818 & 2.137 & 2.316 & 2.487 & 1.763 & 2.078 & 2.332 & 2.461 \\
      \hline
      n=8 & 1 & 1.414 & 1.751 & 2.020 & 2.290 & 1.414 & 1.751 & 2.020 & 2.290 \\
       & 2 & 1.491 & 1.847 & 2.097 & 2.338 & 1.472 & 1.838 & 2.088 & 2.338 \\
       & 3 & 1.578 & 1.946 & 2.183 & 2.417 & 1.546 & 1.925 & 2.167 & 2.413 \\
       & 4 & 1.675 & 2.044 & 2.270 & 2.491 & 1.620 & 2.023 & 2.238 & 2.472 \\
       & 5 & 1.764 & 2.132 & 2.354 & 2.543 & 1.713 & 2.078 & 2.341 & 2.528 \\
      \hline
      n=9 & 1 & 1.400 & 1.750 & 2.050 & 2.300 & 1.400 & 1.750 & 2.050 & 2.300 \\
       & 2 & 1.469 & 1.825 & 2.094 & 2.356 & 1.444 & 1.812 & 2.094 & 2.350 \\
       & 3 & 1.563 & 1.925 & 2.195 & 2.442 & 1.516 & 1.897 & 2.176 & 2.434 \\
       & 4 & 1.662 & 2.027 & 2.289 & 2.527 & 1.587 & 1.990 & 2.260 & 2.507 \\
       & 5 & 1.745 & 2.123 & 2.374 & 2.603 & 1.660 & 2.073 & 2.338 & 2.584 \\
     \hline
     n=10 & 1 & 1.399 & 1.744 & 2.012 & 2.319 & 1.399 & 1.744 & 2.012 & 2.319 \\
      & 2 & 1.450 & 1.812 & 2.093 & 2.374 & 1.429 & 1.795 & 2.081 & 2.366 \\
      & 3 & 1.548 & 1.912 & 2.195 & 2.467 & 1.493 & 1.874 & 2.171 & 2.450 \\
      & 4 & 1.646 & 2.014 & 2.294 & 2.558 & 1.560 & 1.964 & 2.262 & 2.531 \\
      & 5 & 1.737 & 2.111 & 2.390 & 2.637 & 1.630 & 2.046 & 2.351 & 2.609 \\
\hline\hline
     $ n=\infty $  &   1 & 1.282 & 1.645 & 1.960 & 2.326 & 1.282 & 1.645 & 1.960 & 2.326 \\
       &2 & 1.324 & 1.699 & 2.024 & 2.403 & 1.303 & 1.672 & 1.992 & 2.365 \\
       &3 & 1.398 & 1.795 & 2.138 & 2.538 & 1.341 & 1.721 & 2.051 & 2.435 \\
       &4 & 1.480 & 1.899 & 2.263 & 2.686 & 1.384 & 1.777 & 2.117 & 2.513 \\
       &5 & 1.562 & 2.004 & 2.388 & 2.835 & 1.428 & 1.833 & 2.185 & 2.593 \\
    \hline
\end{tabular}
\end{table}

\section{ Efficiency of the Tests of Independence}
In this section we compare the Pitman asymptotic relative
efficiency (or, Pitman ARE) and the empirical power of tests of
independence constructed based on the proposed WRC measures.
\subsection{Pitman Efficiency}
Consider a parametric family $ \lbrace C_{\theta} \rbrace $ of
copulas with $ \theta=\theta_0 $ corresponding to the independence
case. Let $T_{1n}$ and $T_{2n}$ be two test statistics for testing
$H_0:\theta=\theta_0 $ versus $H_1:\theta> \theta_0 $ that reject
null hypothesis for large values of $ T_{1n} $ and $ T_{2n} $.
Suppose that $T_{1n}$ and $T_{2n}$ satisfy the regularity
conditions:

(1) There exist continuous functions $\mu_i(\theta)$ and
$\sigma_i(\theta)$, $ \theta > \theta_0, i=1,2 $ such that for all
sequences $\theta_n=\theta_0+\frac{h}{\sqrt{n}}$, $h>0$, it holds
\begin{equation*}
\mathop {\lim }\limits_{n \to \infty } {P_{{\theta _n}}}\left(
{\frac{{\sqrt n ({T_{in}} - \mu_i ({\theta _n}))}}{{\sigma_i
({\theta _n})}} < z} \right) = \Phi (z), z \in \mathrm{R},\quad
i=1,2,
\end{equation*}
where $\Phi(z)$ is the standard normal distribution function;

(2) The function $ \mu_i(\theta) $ is continuously differentiable
at $ \theta=\theta_0 $. and $ \mu_i^{'}(\theta_0)>0,
\sigma_i(\theta_0)>0, i=1,2. $

Under these conditions, the Pitman ARE of $T_{n1} $ relative to
$T_{n2}$ is equal to
\begin{equation*}
ARE({T_{1n}},{T_{2n}}) = \left[ {\frac{{\frac{\partial }{{\partial
\theta }}{\mu _1}{|_{\theta  = {\theta _0}}}}}{{\frac{\partial
}{{\partial \theta }}{\mu _2}{|_{\theta  = {\theta
_0}}}}}.\frac{{{\sigma _2}({\theta _0})}}{{{\sigma _1}({\theta
_0})}}} \right]^2.
\end{equation*}
For more detail see, \cite{niktin}. In the following we compare
the ARE of proposed WRC measures, relative to the Spearman's rho
for the Cuadras-Aug\'e family of copulas given by (\ref{cuadras}).
 \begin{example}
Suppose that the copula of $ (X, Y) $ be a member of the
Cuadras-Aug\'e family of copulas given by \eqref{cuadras} in
Example 3.1. Let $T_{1n}=\nu_{n,p}^{(l)}$ and $T_{2n}=\rho_{ns}$.
By Theorem 4.1, for the test statistics based on WRC measure
$\nu_{n,p}^{(l)}$, regularity conditions (1) and (2) are satisfied
with $ \theta_0 = 0, \mu_p(\theta)=\nu_p^{(l)} $ and
$\sigma_{p}^{l}$ given in Corollary 4.1 (for $ \theta_0 = 0 $). By
using (\ref{quadl}) and differentiation with respect to $\theta$
one gets
\begin{equation*}
\frac{d \nu_p^{(l)}}{d \theta}(0)=\frac{p+5}{2(p+3)}.
\end{equation*}
Since $\nu_1^{(l)}=\rho_s$ and $\nu_{n,1}^{(l)}=\rho_{n,s}$, then
from Corollary 4.1 we have
\begin{equation*}
ARE(\nu_{n,p}^{(l)},\rho_{ns})=\frac{4(p+5)^2(2p+1)}{3(p+2)^2(p+3)^2}.
\end{equation*}
Similarly, for $\nu_{n,p}^{(s.l)}, \nu_{n,p}^{(u)}$ and
$\nu_{n,p}^{(s.u)}$ we have
\begin{equation*}
ARE(\nu_{n,p}^{(s.l)},\rho_{n,s})=\frac{8(p+5)^2(2p+1)}{3(p+3)^2(p^2+10p+7)},
\end{equation*}
\begin{equation*}
ARE(\nu_{n,p}^{(u)},\rho_{n,s})=\frac{16(2p+1)}{3(p+3)^2},
\end{equation*}
and
\begin{equation*}
ARE(\nu_{n,p}^{(s.u)},\rho_{n,s})=\frac{32(p+2)^2(2p+1)}{3(p+3)^2(p^2+10p+7)}.
\end{equation*}
\end{example}
Table 8 shows the ARE of the test of independence based on WRC
measures compared to the test based on Spearman's rho for the
Clayton copula, as a family of copulas with lower tail dependence
and the Cuadras-Aug\'e family of copulas, as a family of copulas
with upper tail dependence. As we see, for the Clayton family of
copulas the measure $\nu_{n,11}^{(s.l)}$ and for the
Cuadras-Aug\'e family of copulas, the measure $\nu_{n,3}^{(s.u)}$
has the largest Pitman ARE. The same results is still true by
using Kendall's $\tau$ instead of Spearman's $\rho$ since $
ARE(T_{\tau},T_{\rho})=1 $.

\begin{table}\label{arecl.cu}
\caption{\small ARE of test of independence based on WRC measures
$\nu^{(l)}_{n,p},\nu^{(u)}_{n,p}, \nu^{(s.l)}_{n,p}$ and
$\nu^{(s.u)}_{n,p}$, for $p=1,2,...,13$, relative to the
Spearman's rho for the  Cuadras-Aug\'e and Clayton family of
copulas. } \small
\begin{center}
\renewcommand{\arraystretch}{2}
\scalebox{.8}{\begin{tabular}[1]{@{}|c|cccc|cccc|} \hline
&Cuadras-Aug\'e &&&& Clayton  &&&\\
\hline $p$&$ \nu^{(l)}_{n,p} $&$ \nu^{(u)}_{n,p} $&$
\nu^{(s.l)}_{n,p} $&$ \nu^{(s.u)}_{n,p} $
 &$ \nu^{(l)}_{n,p} $&$ \nu^{(u)}_{n,p} $&$ \nu^{(s.l)}_{n,p} $&$ \nu^{(s.u)}_{n,p}p $ \\
\hline 1&1&1&1&
1&1&1&1&1\\
2&0.816&1.066&0.843&1.101& 1.157&0.740&1.194&0.764
\\
3&0.663&1.037&0.721&{\bf 1.127}& 1.217&0.0.583&1.322& 0.634
\\
4&0.551&0.979&0.629&1.119& 1.235&0.480&1.411& 0.548
\\
5&0.467&0.916&0.558&1.095& 1.233&0.407&1.474& 0.486
\\
6&0.404&0.856&0.502&1.063& 1.221&0.353&1.518& 0.439
\\
7&0.355&0.800&0.457&1.028& 1.204&0.312&1.548& 0.401
\\
8&0.316&0.749&0.419&0.992& 1.184&0.279&1.569&0.370
\\
9&0.285&0.703&0.387& 0.956& 1.163&0.253&1.582&0.344
\\
10&0.258&0.662&0.360&0.922& 1.142&0.231&1.589&0.322
\\
11&0.237&0.625&0.336&0.888 & 1.119&0.213&{\bf 1.589}&0.302
\\
12&0.218&0.592&0.316& 0.857& 1.096&0.197&1.585&0.285
\\
13&0.202&0.562&0.297& 0.827& 1.028&0.183&1.580&0.270
\\
\hline\hline
\end{tabular}}
\end{center}
\end{table}
\begin{table}
 \caption{\small The power of tests of independence based on the Kendall's tau
($\tau_n$), Spearman's rho ($\rho_{n,s}$) the WRC measures
$\nu_{n,p}^{(s.l)}$, $\nu_{n,p}^{(s.u)}$, $p=2,3,4,5$, computed
from 50,000 samples of size 50 for Clayton copula with different
values of the parameter ($\theta$) and different level of
dependence in terms of Spearman's rho ($\rho_s$)}. \small
\begin{tabular}{|c|c|c|c|c|c|c|c|c|c|c|c|c|}
\hline
  $\theta$ & $\rho_{s}$  &$\nu_{n,5}^{(s.l)} $ & $\nu_{n,4}^{(s.l)}$ & $\nu_{n,3}^{(s.l)}$ & $\nu_{n,2}^{(s.l)}$ & $\nu_{n,5}^{(s.u)}$ & $\nu_{n,4}^{(s.u)}$ & $\nu_{n,3}^{(s.u)}$ & $\nu_{n,2}^{(s.u)}$ & $\rho_{n,s}$ & $\tau_n$ \\
 \hline
  0.000  &0.000 &     0.052 & 0.051 & 0.050 & 0.050 & 0.051 & 0.051 & 0.050 & 0.050 & 0.049 & 0.050 \\
  0.050  &0.036 &     0.095 & 0.092 & 0.090 & 0.087 & 0.071 & 0.072 & 0.075 & 0.077 & 0.082 & 0.082 \\
  0.110  &0.078 &     0.167 & 0.163 & 0.156 & 0.147 & 0.101 & 0.105 & 0.110 & 0.118 & 0.134 & 0.135 \\
  0.200  &0.136 &     0.309 & 0.301 & 0.289 & 0.271 & 0.158 & 0.168 & 0.183 & 0.204 & 0.242 & 0.244 \\
  0.350  &0.221 &     0.571 & 0.560 & 0.541 & 0.512 & 0.282 & 0.306 & 0.338 & 0.384 & 0.458 & 0.461 \\
  0.750  &0.397 &     0.937 & 0.934 & 0.928 & 0.915 & 0.641 & 0.688 & 0.742 & 0.806 & 0.880 & 0.881 \\
  1.800  &0.652 &     0.999 & 0.999 & 0.999 & 0.999 & 0.982 & 0.989 & 0.994 & 0.998 & 0.999 & 0.999 \\
  3.200  &0.800 &     1.000 & 1.000 & 1.000 & 1.000 & 0.999 & 0.999 & 1.000 & 1.000 & 1.000 & 1.000 \\
  5.600  &0.900 &     1.000 & 1.000 & 1.000 & 1.000 & 1.000 & 1.000 & 1.000 & 1.000 & 1.000 & 1.000 \\
 30.000  &0.993 &     1.000 & 1.000 & 1.000 & 1.000 & 1.000 & 1.000 & 1.000 & 1.000 & 1.000 & 1.000 \\
   \hline
\end{tabular}
\end{table}
\begin{center}

\begin{table}
\caption{\small The power of tests of independence based on the
Kendall's tau ($\tau_n$), Spearman's rho ($\rho_{n,s}$) the WRC
measures $\nu_{n,p}^{(s.l)}$, $\nu_{n,p}^{(s.u)}$, $p=2,3,4,5$,
computed from 50,000 samples of size 50 for Gumbel copula with
different values of the parameter ($\theta$) and different level
of dependence in terms of Spearman's rho ($\rho_s$)}. \small
\begin{tabular}{|c|c|c|c|c|c|c|c|c|c|c|c|}
\hline
   $\theta$ & $\rho_{s}$  &$\nu_{n,5}^{(s.l)} $ & $\nu_{n,4}^{(s.l)}$ & $\nu_{n,3}^{(s.l)}$ & $\nu_{n,2}^{(s.l)}$ & $\nu_{n,5}^{(s.u)}$ & $\nu_{n,4}^{(s.u)}$ & $\nu_{n,3}^{(s.u)}$ & $\nu_{n,2}^{(s.u)}$ & $\rho_{n,s}$ & $\tau_n$ \\
 \hline
  1.000   & 0.000  & 0.052          & 0.051 & 0.050 & 0.050 & 0.051 & 0.051 & 0.050 & 0.050 & 0.049 & 0.050 \\
  1.030   & 0.041  & 0.083          & 0.084 & 0.084 & 0.086 & 0.103 & 0.101 & 0.098 & 0.094 & 0.090 & 0.092 \\
  1.070   & 0.095  & 0.136          & 0.141 & 0.146 & 0.155 & 0.199 & 0.195 & 0.189 & 0.181 & 0.169 & 0.172 \\
  1.150   & 0.193  & 0.283          & 0.298 & 0.317 & 0.344 & 0.439 & 0.434 & 0.426 & 0.412 & 0.385 & 0.390 \\
  1.250   & 0.295  & 0.504          & 0.530 & 0.562 & 0.604 & 0.708 & 0.707 & 0.703 & 0.690 & 0.660 & 0.666 \\
  1.400   & 0.412  & 0.776          & 0.804 & 0.834 & 0.867 & 0.917 & 0.920 & 0.919 & 0.917 & 0.902 & 0.906 \\
  1.700   & 0.576  & 0.974          & 0.981 & 0.987 & 0.992 & 0.996 & 0.996 & 0.996 & 0.997 & 0.996 & 0.996 \\
  2.200   & 0.731  & 0.999          & 0.999 & 0.999 & 1.000 & 1.000 & 1.000 & 1.000 & 1.000 & 1.000 & 1.000 \\
  3.000   & 0.848  & 1.000          & 1.000 & 1.000 & 1.000 & 1.000 & 1.000 & 1.000 & 1.000 & 1.000 & 1.000 \\
  4.500   & 0.930  & 1.000          & 1.000 & 1.000 & 1.000 & 1.000 & 1.000 & 1.000 & 1.000 & 1.000 & 1.000 \\
\hline
\end{tabular}
\end{table}
\end{center}

\begin{table}
\caption{\small The power of tests of independence based on the
Kendall's tau ($\tau_n$), Spearman's rho ($\rho_{n,s}$) the WRC
measures $\nu_{n,p}^{(s.l)}$, $\nu_{n,p}^{(s.u)}$, $p=2,3,4,5$,
computed from 50,000 samples of size 50 for Normal copula with
different values of the parameter ($\theta$) and different level
of dependence in terms of Spearman's rho ($\rho_s$)}. \small
 \begin{tabular}{|c|c|c|c|c|c|c|c|c|c|c|c|c|}
 \hline
  $\theta$ & $\rho_{s}$  &$\nu_{n,5}^{(s.l)} $ & $\nu_{n,4}^{(s.l)}$ & $\nu_{n,3}^{(s.l)}$ & $\nu_{n,2}^{(s.l)}$ & $\nu_{n,5}^{(s.u)}$ & $\nu_{n,4}^{(s.u)}$ & $\nu_{n,3}^{(s.u)}$ & $\nu_{n,2}^{(s.u)}$ & $\rho_{n,s}$ & $\tau_n$
  \\
 \hline
 0.000   & 0.000 & 0.052 & 0.051 & 0.051 & 0.050 & 0.052 & 0.052 & 0.051 & 0.050 & 0.050 & 0.050 \\
 0.040   & 0.038 & 0.083 & 0.083 & 0.083 & 0.083 & 0.083 & 0.083 & 0.083 & 0.083 & 0.083 & 0.084 \\
  0.070  & 0.066 & 0.115 & 0.115 & 0.116 & 0.117 & 0.115 & 0.115 & 0.116 & 0.116 & 0.117 & 0.118 \\
  0.120  & 0.114 & 0.185 & 0.187 & 0.190 & 0.193 & 0.184 & 0.187 & 0.189 & 0.192 & 0.195 & 0.196 \\
  0.200  & 0.191 & 0.343 & 0.351 & 0.358 & 0.366 & 0.342 & 0.350 & 0.357 & 0.366 & 0.373 & 0.374 \\
  0.300  & 0.287 & 0.594 & 0.607 & 0.620 & 0.635 & 0.590 & 0.604 & 0.618 & 0.632 & 0.646 & 0.646 \\
   0.400 & 0.384 & 0.818 & 0.831 & 0.844 & 0.857 & 0.816 & 0.829 & 0.842 & 0.856 & 0.868 & 0.867 \\
   0.550 & 0.532 & 0.978 & 0.982 & 0.985 & 0.988 & 0.978 & 0.982 & 0.985 & 0.988 & 0.990 & 0.990 \\
   0.750 & 0.734 & 1.000 & 1.000 & 1.000 & 1.000 & 1.000 & 1.000 & 1.000 & 1.000 & 1.000 & 1.000 \\
   0.950 & 0.945 & 1.000 & 1.000 & 1.000 & 1.000 & 1.000 & 1.000 & 1.000 & 1.000 & 1.000 & 1.000 \\
   \hline
\end{tabular}
\end{table}
\subsection{Comparing the Power of Tests}
In the following we compare the power of tests based on the WRC
measures $\nu_{n,p}^{(s.l)}$ and $\nu_{n,p}^{(s.u)}$ for
$p=2,3,4,5 $ with the tests based on Kendall's tau and Spearman's
rho for testing independence against the positive quadrant
dependence \cite{Mutula}, i.e.,
\begin{equation*}
H_0:C(u,v)=uv \,\,\,\,\, \text{against} \,\,\,\,\, H_1:C(u,v) >
uv.
\end{equation*}
Monte Carlo simulations carry out for Gumbel, Clayton, Frank and
Normal copulas with various degrees of dependence, with the sample
of size $n=50$ at significance level $0.05$. The first column and
the second column of the Tables 7-10 indicate, the various values
of the copula parameter $ \theta $ and the value of the
corresponding spearman's
 $ \rho $ (as the level of dependence). Tables 7-10, show the power of tests that obtained under alternatives, defined by
Gumbel, Clayton, Frank and normal copulas. The Clayton copula has
lower tail dependence, the Gumbel copula has upper tail dependence
and the normal copula has neither. We see that for the Clayton's
family of copulas for all degree of dependence in terms of
Spearman's rho ($\rho_s)$, the test based on $\nu_{n,5}^{(s.l)}$
has the maximum power. For the Gumbel family of copulas, the test
based on $\nu_{n,5}^{(s.u)}$ has the maximum power. For the normal
family of copulas the behavior of all tests of independence are
the same. It seems that the members of the proposed class
$\nu_{n,p}^{(u)}$ defined by (\ref{rwu}) performs very well,
compared with the Kendall's tau and Spearman's rho, if there
exists a higher dependence in the upper tail. If there exists a
higher dependence in the lower tail, the members of the proposed
class $\nu_{n,p}^{(l)}$ defined by (\ref{rw}) has a better
performance.

\section{Discussion}

In this paper we have presented a class of weighted rank
correlation measures extending the Spearman's rank correlation
coefficient. The proposed class was constructed by giving suitable
weights to the distance between two sets of ranks to place more
emphasis on items having low rankings than those have high
rankings, or vice versa. The asymptotic distributions of the
proposed measures in general and under the null hypothesis of
independence are derived. We also carried out a simulation study
to compare the performance of the proposed measures with the
Spearman's and Kendall's rank correlation measures. Another line
of research is the extension of the result to the situations where
$n$ objects are ranked by $m>2$ independent sources and the
interest is focused on agreement on the bottom or top rankings.
\bigskip


\begin{thebibliography}{99}

\bibitem{Behboodian2005} J. Behboodian, A. Dolati and  M. Ubeda-Flores,
Measures of association based on average quadrant dependence, J.
Probab. Statist. Sci 3 (2005), pp. 161--173.

\bibitem{Blest2000} D. C. Blest, Rank correlation an alternative measure,
Austr. NZ J. Statist. 42 (2000), pp. 101--111.

\bibitem{Coolen2014}
T. Coolen-Maturi, A new weighted rank coefficient of concordance,
J. Appl. Statist. 41 (2014), pp. 1721--1745.

\bibitem{Dancelli2013}
L. Dancelli, M. Manisera and M. Vezzoli, On two classes of
Weighted Rank Correlation measures deriving from the Spearman's $
\rho $, in Statistical Models for Data Analysis (2013), pp.
107--114.

\bibitem{Genest2003}
C. Genest and J. F. Plante, On Blest's measure of rank
correlation, Canad. J. Statist. 31 (2003), pp. 35--52.

\bibitem{Hajek1969}
J. H\'ajek, A course in nonparametric statistics, San Francisco,
Holden-Day, (1969).

\bibitem{Hyndman1996}
 R. J. Hyndman and Y. Fan, Sample quantiles in statistical
packages, American Statistician 50 (1996), pp. 361--365.

\bibitem{Iman1987}
R. L. Iman and W. J. Conover, A measure of top-down correlation,
Technometrics 29 (1987), pp. 351--357.

\bibitem{Kendall1948}
M. Kendall, Rank Correlation Methods, Charles Griffin and Company
Limited: London, (1948).

\bibitem{Mutula}
P. Matula, A note on some inequalities for certain clases of
positively dependent random variables, Probab. Math. Statist. 24
(2004), pp. 17--26

\bibitem{Maturi2008}
T. A. Maturi and E. H. Abdelfattah, A new weighted rank
correlation, J. Math. Statist. 4 (2008), pp. 226--230.

\bibitem{Nelsen2006}
R. B. Nelsen, An introduction to copulas, 2nd. New York: Springer
Science Business Media, (2006).

\bibitem{niktin}
Ya. Yu. Nikitin, Asymptotic efficiency of nonparametric tests,
Cambridge University Press, (2009).


\bibitem{Pinto2005}
J. Pinto da Costa and C. Soares, A weighted rank measure of
correlation, Austr. NZ J. Statist. 47 (2005), pp. 515--529.

\bibitem{Quade1992}
D. Quade and I. Salama, A survey of weighted rank correlation,
Order Statistics and Nonparametrics: Theory and Applications,
(1992), pp. 213--224.

\bibitem{Ruschendorf1976}
L. R$\ddot{\text u}$schendorf, Asymptotic distributions of
multivariate rank order statistics, Ann. Statist. 4 (1976), pp.
912--923.

\bibitem{Salama1982}
I. Salama and D. Quade, A nonparametric comparison of two multiple
regressions by means of a weighted measure of correlation, Commun.
Stat.-Theory Methods 11 (1982), pp. 1185--1195.

\bibitem{Segers2012}
J. Segers, Asymptotics of empirical copula processes under
non-restrictive smoothness assumptions, Bernoulli 18 (2012), pp.
764--782.

\bibitem{Shieh1998}
G. S. Shieh, A weighted Kendall's tau statistic, Statist. Probab.
Lett. 39 (1998), pp. 17--24.

\bibitem{Sidak1999}
Z. $\breve{\text S}$id\'ak, P. K. Sen, and J. H\'ajek, Theory of
rank tests. Academic press, (1999).

\bibitem{Sklar1959} A. Sklar, Fonctions de R\'epartition
\'a n Dimensions et Leurs Marges, Publications of the Institute of
Statistics 8 (1959), pp. 229--231.

\bibitem{Van1996}
A.W. van der Vaart and J. A. Wellner, Weak Convergence and
Empirical Processes: with Applications to Statistics, Springer,
New York, (1996).

\end{thebibliography}
\end{document}